  \title{Random graphs with few disjoint cycles}
  \date{20 September 2009}
  \newenvironment{proof}{\noindent{\bf Proof}}{\hspace*{\fill}$\Box$}
  \newenvironment{proofof}[1]{%
  \noindent {\bf Proof of #1} \hspace{.05in}}%
  {\hspace*{\fill}$\Box$}
  \newtheorem{theorem}{Theorem}[section]
  \newtheorem{lemma} [theorem] {Lemma}
\let\eps=\epsilon
\def\enddiscard{}
\long\def\discard#1\enddiscard{}
  \newcommand{\pr}{\mathbb P}
  \newcommand{\Bigc}{\rm Big}
  \newcommand{\Frag}{\rm Frag}
  \newcommand{\inu}{\in_{u}}
  \newcommand{\ca}{{\mathcal A}}
  \newcommand{\cb}{{\mathcal B}}
  \newcommand{\cc}{{\mathcal C}}
  \newcommand{\cd}{{\mathcal D}}
  \newcommand{\cf}{{\mathcal F}}
  \newcommand{\ch}{{\mathcal H}}
  \newcommand{\ck}{{\mathcal K}}
  \newcommand{\cs}{{\mathcal S}}
  \newcommand{\ct}{{\mathcal T}}
  \newcommand{\cw}{{\mathcal W}}
  \newcommand{\core}{{\rm core}}
  \newcommand{\ex}{{\rm Ex}}
  \newcommand{\apex}{\mbox{\rm apex \!}}
  \newcommand{\bin}{\mbox{Bin}}
\begin{document}

  \author{Valentas Kurauskas and Colin McDiarmid\\
  University of Oxford}
  \maketitle

  \begin{abstract}
  The classical Erd\H{o}s-P\'{o}sa theorem states that
  for each positive integer $k$ there is an $f(k)$
  such that, in each graph $G$ which does not have $k+1$ disjoint cycles,
  there is a blocker of size at most $f(k)$; that is,
  a set $B$ of at most $f(k)$ vertices such that $G-B$ has no cycles.
  We show that, amongst all such graphs on vertex set $\{1,\ldots,n\}$,
  all but an exponentially small proportion have a blocker of size $k$.
  We also give further properties of a random graph sampled
  uniformly from this class; concerning uniqueness of the blocker, connectivity,
  chromatic number and clique number.

  A key step in the proof of the main theorem is to show that there must be a blocker
  as in the Erd\H{o}s-P\'{o}sa theorem with the extra `redundancy' property that
  $B-v$ is still a blocker for all but at most $k$ vertices $v \in B$.
  \end{abstract}
  \bigskip
  \bigskip

  keywords: disjoint cycles, blocker, Erd\H{o}s-P\'{o}sa theorem, apex graph, growth constant, random graph
  \bigskip

  corresponding author:

  Colin McDiarmid, Department of Statistics, 1 South Parks Road, Oxford OX1 3TG, UK

  tel: +44 1865 272860  \hspace{.1in} fax: +44 1865 272595
  \bigskip

  email:

  Valentas Kurauskas: valentas@gmail.com
  \smallskip

  Colin McDiarmid:  cmcd@stats.ox.ac.uk
 \newpage

\section{Introduction} \label{sec.intro}

  Call a set $B$ of vertices in a graph $G$ a {\em blocker} if the graph $G-B$
  obtained by deleting the vertices in $B$ has no cycles.
  The classical theorem of Erd\H{o}s and P\'{o}sa~\cite{ep65} from 1965
  (see for example~\cite{diestel})
  states that for each positive integer $k$ there is a positive integer $f(k)$
  such that the following holds:
  for each graph $G$ which does not have $k+1$ disjoint cycles
  (that is, pairwise vertex-disjoint cycles),
  there is a blocker $B$ of size at most $f(k)$.
  The least value we may take for $f(k)$ is of order $k \ln k$.

  We let $\cf$ denote the class of forests; let $\apex^k \cf$ denote the class of
  graphs with a blocker of size at most $k$; and let $\ex(k+1) C$
  denote the class of graphs which do not have $k+1$ disjoint cycles.
  With this notation the Erd\H{o}s-P\'{o}sa theorem says that
\[
  \ex(k+1) C \subseteq \apex^{f(k)} \cf.
\]
  Now clearly
\begin{equation} \label{eqn.forestapex}
  \ex (k+1) C \supseteq \apex^k \cf;
\end{equation}
  for if a graph has a blocker $B$ then
  it can have at most $|B|$ disjoint cycles.
  How much bigger is the left side of~(\ref{eqn.forestapex}) than the right?
  Our main theorem is that the difference is relatively small:
  amongst all graphs without $k+1$ disjoint cycles,
  all but a small proportion have a blocker of size $k$.
  For any class $\ca$ of graphs we let $\ca_n$ denote the set of graphs in $\ca$
  on the vertex set $\{1,\ldots,n\}$.
 (When we say a `class' of graphs it is assumed to be closed under automorphism.)

\begin{theorem} \label{thm.cycles}
  For each fixed positive integer $k$,  as $n \to \infty$
\begin{equation} \label{eqn.thmcycles}
  |(\ex (k\!+\!1) C )_n| = (1 + e^{-\Omega(n)})\ |(\apex^{k} \cf)_n|.
\end{equation}
\end{theorem}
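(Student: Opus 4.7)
Since $(\apex^k\cf)_n \subseteq (\ex(k+1)C)_n$ by~(\ref{eqn.forestapex}), the task reduces to bounding
\[
\cb_n := (\ex(k+1)C)_n \setminus (\apex^k\cf)_n,
\]
showing $|\cb_n| \leq e^{-\Omega(n)} |(\apex^k\cf)_n|$. The plan hinges on the ``redundancy'' strengthening of Erd\H{o}s--P\'osa announced in the abstract: every $G \in \ex(k+1)C$ has a blocker $B$ with $|B| \leq f(k)$ and an essential set $E \subseteq B$ of size at most $k$ (where $v \in E$ iff $B - v$ is not a blocker). If $|B| \leq k$ then $G \in (\apex^k\cf)_n$, so for $G \in \cb_n$ one has $|B| > k$; in fact $|B| = k+1$ is impossible, since any non-essential $v$ (which exists as $|E|\leq k < |B|$) would make $B-v$ a $k$-blocker, contradicting $G \in \cb_n$. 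Hence every redundant blocker of a bad $G$ has $|B| \geq k+2$, giving at least two non-essential vertices $v$; for each such $v$ the forest $F := G[[n]\setminus B]$ remains a forest after adjoining $v$ with its $G$-edges into $[n]\setminus B$.

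I would next encode each $G \in \cb_n$ by the tuple $(B, E, F, \text{edges})$ associated with some redundant blocker and upper-bound the number of such tuples. The admissible edge sets from a non-essential $v$ into $[n]\setminus B$ number $\prod_C(|C| + 1)$ (with $C$ ranging over components of $F$) rather than $2^{n-|B|}$, via the bijection $\sum_F \prod_C(|C|+1) = T(n-|B|+1)$ that identifies a forest-plus-valid-attachment with a forest on one more vertex (here $T(m)$ is the number of labelled forests on $m$ vertices). For ``typical'' $F$ with a giant tree component of size $n - O(\log n)$ this product is $n^{O(1)}$, so each of the $\geq 2$ non-essential vertices contributes a saving of $2^{-\Omega(n)}$ over the unrestricted count; forests with off-giant fragment mass $\omega(\log n)$ are themselves $e^{-\Omega(n)}$-rare among forests on $n-|B|$ vertices, by standard component-size analysis.

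Comparing with the lower bound $|(\apex^k\cf)_n| \geq 2^{\binom{k}{2}} \cdot 2^{k(n-k)} \cdot T(n-k)$ (obtained by fixing the blocker to $\{1,\ldots,k\}$) is intended to deliver the required ratio. The hardest part is the bookkeeping: polynomial factors from choosing the blocker ($\binom{n}{|B|} = \Theta(n^{|B|})$), from the forest-compatibility products ($n^{O(|B|-|E|)}$), and from the ratio $T(n-|B|)/T(n-k) = \Theta((en)^{-(|B|-k)})$ all compete with the free-edge discrepancy $2^{|E|(n-|B|) - k(n-k)}$, so a naive summation leaves only a polynomial gap rather than an exponential one. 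Closing this gap appears to require either pinning down a canonical redundant blocker (so each bad $G$ is counted only once) or exploiting the further constraint that $G[[n]\setminus E]$ itself lies in $\ex(k+1)C$, which forces the non-essential vertices' attachments to interact in a strongly restricted manner rather than acting independently as the product $\prod_C(|C|+1)$ would allow.
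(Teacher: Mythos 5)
Your high-level plan matches the paper's: reduce to bounding the difference class via the redundant-blocker extension of Erd\H{o}s--P\'osa, then do a careful encoding/counting argument. You also correctly identify the critical obstruction — that a naive count of (blocker, essential set, forest, edges) tuples leaves a polynomial, not exponential, deficit. That diagnosis is exactly right and shows you understand where the difficulty lies.

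However, the proposal stops short of the idea that actually closes the gap, and the two resolutions you float (a canonical redundant blocker; interaction between non-essential attachments) are not what works. Canonicity of the blocker only removes an $n^{O(1)}$ overcount and cannot produce a factor $e^{-\Omega(n)}$. The missing ingredient is a constraint on the \emph{essential} vertices' attachments, which is where all the free bits ($2^{k(n-|B|)}$ of them) live. The paper extracts this constraint as follows. Any $G\in\ex(k+1)C\setminus\apex^k\cf$ has a cycle $C$ avoiding $S$; such a cycle can meet only $O(|B|-k)$ ``spikes'' of the forest $F=G-B$, because traversing $C$ one must return to $B\setminus S$ between visits to distinct spikes. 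If every $s\in S$ formed a triangle with, say, $2|B|-k$ spikes, one could greedily pick $k$ of those triangles pairwise disjoint and disjoint from $C$, contradicting $G\in\ex(k+1)C$. So some $s\in S$ forms triangles with only $O(|B|)$ spikes. Now a random forest has $\Omega(n)$ spikes (Lemma~\ref{lem.spike}), and for a uniformly random bipartite attachment of $s$ each spike independently becomes a triangle with probability $1/4$; a Chernoff bound then shows this event has probability $e^{-\Omega(n)}$. That is the exponential saving, and it replaces rather than refines your fragment-mass calculation (which is also questionable as stated: the tail of the off-giant mass of a uniform labelled forest is not obviously $e^{-\Omega(n)}$ beyond $\omega(\log n)$, and the paper deliberately uses only a much coarser component-count bound). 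Without the spike-triangle argument the proof does not close.
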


  Graphs in $\ex \, 2C$ (that is, with no two disjoint cycles) have been well
  characterised (Dirac~\cite{dirac65}, Lov\'{a}sz~\cite{lovasz65});
  and from this characterisation we can much refine the above result for the case $k=1$
  -- see Section~\ref{sec.no2} below.
  It seems that no such characterisation is known for graphs in $\ex\ 3C$.

  The natural partner to Theorem~\ref{thm.cycles} is an asymptotic estimate
  for $|(\apex^{k}\cf)_n|$. Recall the result of R\'enyi (1959)~\cite{renyi59}
  that 
  \begin{equation} \label{eqn.renyi}
    |\cf_n| \sim e^{\frac12} n^{n-2} \sim
    \left(\frac{e}{2\pi}\right)^{\frac12} n^{-\frac52} e^n n!
    \hspace{.3in} \mbox{ as } n \to \infty.
  \end{equation}

\begin{theorem} \label{thm.apexforest}
  For each fixed positive integer $k$, as $n \to \infty$
\begin{equation} \label{eqn.apexforest}
  |(\apex^{k} \cf)_n| \sim  c_k 2^{kn} |\cf_n|
\end{equation}
  where $c_k = \left( 2^{\binom {k+1} 2} e^{k} k! \right)^{-1}$.
\end{theorem}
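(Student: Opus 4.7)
The plan is to bound $|(\apex^k\cf)_n|$ above and below by counting the set $\cp_n$ of pairs $(B,G)$ with $B\subseteq[n]$, $|B|=k$, and $B$ a blocker of $G$. Choosing in turn $B$, the edges within $B$, the edges between $B$ and $[n]\setminus B$, and the forest $G-B$, we obtain
\[
|\cp_n| \;=\; \binom{n}{k}\cdot 2^{\binom{k}{2}+k(n-k)}\cdot |\cf_{n-k}|.
\]
Every $G\in(\apex^k\cf)_n$ arises from at least one such pair (extend any minimum blocker to size $k$), giving $|(\apex^k\cf)_n|\le|\cp_n|$; and R\'enyi's formula~(\ref{eqn.renyi}), combined with the identity $\binom{k}{2}-k^2=-\binom{k+1}{2}$, yields $|\cp_n|\sim c_k\,2^{kn}|\cf_n|$.

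\textbf{Matching lower bound.} Write $N_k(G)$ for the number of size-$k$ blockers of $G$. Then $|\cp_n|-|(\apex^k\cf)_n|=\sum_G(N_k(G)-1)$, so it suffices to show that, for a uniformly random pair $(B,G)\in\cp_n$, the graph $G$ has a blocker of size $\le k$ other than $B$ with probability $o(1)$. By symmetry fix $B=[k]$; then $F:=G-B$ is a uniform random forest on $\{k+1,\ldots,n\}$, independent of the Bernoulli$(\tfrac12)$ edges of $G$ inside $B$ and between $B$ and $[n]\setminus B$. For each fixed $B'\ne B$ with $|B'|\le k$, choose $b\in B\setminus B'$ and let $W=[n]\setminus(B\cup B')$, of size at least $n-2k$. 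If $B'$ is a blocker of $G$, then in particular the subgraph of $G-B'$ on $\{b\}\cup W$ --- which is $F|_W$ together with $b$'s random edges to $W$ --- is a forest, and this forces $d_b\le T_W$, where $d_b$ denotes the number of edges from $b$ to $W$ (a Bin$(|W|,\tfrac12)$ variable) and $T_W$ denotes the number of components of $F|_W$. Hence
\[
\Pr[\,B'\text{ is a blocker}\,] \;\le\; \Pr[d_b\le n/4]+\Pr[T_W\ge n/4].
\]

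\textbf{Key estimates and obstacle.} The first term on the right is $e^{-\Omega(n)}$ by Chernoff. The second is where the main work lies: writing $V=B'\setminus B$ we have $T_W\le T_F+\sum_{v\in V}\deg_F(v)$, and I bound each summand using singularity analysis of the forest EGF $e^{T(x)}$ at its dominant singularity $x=1/e$. A direct calculation with $e^{\alpha T(x)}$ gives $\E[\alpha^{T_F}]\to\alpha\,e^{(\alpha-1)/2}$ for every fixed $\alpha>0$, so $T_F$ has a sub-exponential tail and $\Pr[T_F\ge Cn]=e^{-\Omega(n)}$ by Markov's inequality; a parallel computation with the rooted-tree EGF $T_R$ shows that each $\deg_F(v)$ has a bounded exponential moment. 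These two tail estimates together give $\Pr[T_W\ge n/4]=e^{-\Omega(n)}$, and hence $\Pr[B'\text{ is a blocker}]=e^{-\Omega(n)}$; a union bound over the $O(n^k)$ choices of $B'$ finishes the lower bound. The main obstacle is the exponential-tail analysis of the random-forest quantities $T_F$ and $\deg_F(v)$, which requires a careful reading of the singularities of the relevant generating functions.
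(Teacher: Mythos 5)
Your proposal is correct, and the upper-bound half (choosing the $k$-set $B$, the graph on $B$, the bipartite part, and the forest on $V\setminus B$, then simplifying with R\'enyi's formula) is identical to the paper's. Where you diverge is in the lower bound, i.e.\ in showing that almost all such constructions produce a graph whose only $k$-blocker is the chosen set. The paper's route is to exploit spikes: by its Lemma~\ref{lem.spike} (an application of the pendant-appearances theorem of~\cite{msw06}) a random forest has $\Omega(n)$ spikes, so with overwhelming probability every $s\in S_0$ completes $k+1$ triangles with disjoint spikes, which pins $S_0$ down as the unique $k$-blocker. Your route is to observe that if $B'\neq B$ is another small blocker, then for $b\in B\setminus B'$ the number $d_b$ of edges from $b$ into $W=[n]\setminus(B\cup B')$ cannot exceed the number of components $T_W$ of $F|_W$, since $G[\{b\}\cup W]\subseteq G-B'$ must be acyclic; you then beat this event by a Chernoff bound on the binomial $d_b$ against an exponential tail bound on $T_W\le\kappa(F)+\sum_{v\in B'\setminus B}\deg_F(v)$. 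Both arguments are sound and give the same exponential rate. A few practical comments: for the tail of $\kappa(F)$ you do not need singularity analysis --- the paper's Lemma~\ref{lem.msw} (Theorem~2.1 of~\cite{msw06}, a one-line consequence of bridge-addability) already gives $\Pr[\kappa(F_n)\ge t+1]\le 1/t!$, which is far stronger than what you need; similarly the tail of $\deg_F(v)$ is dominated by $\Delta(F)$, which the paper bounds elementarily in the proof of Theorem~\ref{thm.standout}. So your generating-function computations, while correct (the singular exponent of $T(z)$ at $z=1/e$ is $3/2$, giving $\E[\alpha^{\kappa(F_n)}]\to\alpha e^{(\alpha-1)/2}$), are heavier machinery than necessary. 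Finally, a structural remark: the paper develops the spike lemma here because exactly the same tool is reused in the proof of the main Theorem~\ref{thm.cycles}, where the redundant-blocker construction requires counting triangles through $S$ that avoid an arbitrary stray cycle; your degree-versus-components argument does not transfer as directly to that setting, so if your goal is the whole paper rather than this one theorem, the spike lemma earns its keep.
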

  A class $\ca$ of graphs has {\em growth constant} $\gamma$ if
\[
  \left(|\ca_n| / n! \right)^{1/n} \to \gamma \;\; \mbox{ as } n \to \infty.
  \]
  The above results~(\ref{eqn.thmcycles}), (\ref{eqn.renyi}) and~(\ref{eqn.apexforest})
  show that both $\apex^k \cf$ and $\ex(k+1)C$ have growth constant $2^k e$.

  In order to prove Theorem~\ref{thm.cycles}, we use a seemingly minor
  `redundant blocker' extension,
  Theorem~\ref{thm.redblock}, of the Erd\H{o}s-P\'{o}sa theorem.
  Call a blocker
  $B$ in a graph $G$ $k$-{\em redundant} if
  $B\setminus \{v\}$ is still a blocker for all but at most $k$ vertices $v \in B$.
  Thus a set $B$ of vertices in $G$ is a $k$-redundant blocker if and only
  there is a subset $S$ of $B$ of size at most $k$ such that each cycle
  in $G-S$ has at least two vertices in $B \setminus S$.
  Theorem~\ref{thm.redblock} says that if $G$ does not have $k+1$
  disjoint cycles then it has a small $k$-redundant blocker.
  Let us now take $f(k)$ as the least value that works in the
  Erd\H{o}s-P\'{o}sa theorem;
  and recall that $f(k)$ is $\Theta(k \ln k)$.

\begin{theorem} \label{thm.redblock}
  If $G$ does not have $k+1$ disjoint cycles then it has a $k$-redundant
  blocker of size at most $f(k)+k$.
\end{theorem}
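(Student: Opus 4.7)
My plan is to induct on $k$. The base case $k=0$ is immediate: a graph with no cycle is a forest, so $B=\emptyset$ is a $0$-redundant blocker of size $f(0)=0$. For the inductive step, assume the result for $k-1$ and let $G$ have no $k+1$ disjoint cycles. If in fact $G$ has no $k$ disjoint cycles, I would apply the inductive hypothesis directly to $G$, which yields a $(k-1)$-redundant, hence $k$-redundant, blocker of size at most $f(k-1)+(k-1)\le f(k)+k$.

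The substantive case is when $G$ has exactly $k$ pairwise disjoint cycles, forming a maximum packing $C_1,\ldots,C_k$. Here I would first look for a vertex $v$ such that $G-v$ has at most $k-1$ disjoint cycles. If such $v$ exists, the inductive hypothesis applied to $G-v$ produces a $(k-1)$-redundant blocker $B'$ of $G-v$ of size at most $f(k-1)+(k-1)$, and I would set $B=B'\cup\{v\}$, which has size at most $f(k)+k$. Then $G-B=(G-v)-B'$ is a forest, so $B$ is a blocker of $G$; moreover, for every $w\in B'$ one has $G-(B\setminus\{w\})=(G-v)-(B'\setminus\{w\})$, so $w$ is non-redundant in $B$ (with respect to $G$) exactly when it is non-redundant in $B'$ (with respect to $G-v$). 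Hence the number of non-redundant vertices of $B$ is at most $(k-1)+1=k$, the extra $+1$ accounting for $v$ itself.

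The main obstacle is the remaining ``robust'' subcase in which removing any single vertex of $G$ still leaves $k$ disjoint cycles, so induction no longer reduces the problem. Here my plan is to appeal directly to the Erd\H{o}s-P\'{o}sa theorem for a blocker $B_0$ of $G$ with $|B_0|\le f(k)$ and then adjoin one carefully chosen vertex $s_i\in V(C_i)$ from each cycle of the maximum packing, setting $B=B_0\cup S$ with $S=\{s_1,\ldots,s_k\}$, so $|B|\le f(k)+k$. To finish, I need to verify that the only possibly non-redundant vertices of $B$ lie in $S$, i.e.\ that any cycle $C$ with $V(C)\cap B=\{v\}$ and $v\notin S$ leads to a contradiction. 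Such $C$ avoids $S$; if it is vertex-disjoint from every $C_i$ then $\{C,C_1,\ldots,C_k\}$ is a $(k+1)$-packing, an immediate contradiction, while if $C$ meets some $C_i$ then an exchange argument (using the robustness supplied by this subcase to reroute the packing through $C$) should again produce $k+1$ disjoint cycles. The hardest step will be designing this exchange and choosing the $s_i\in V(C_i)$ so that a stray cycle $C$ always forces the $(k+1)$-th disjoint cycle.
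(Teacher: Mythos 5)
Your induction splits into three cases, and the first two are sound. In Case~1 ($G$ has no $k$ disjoint cycles) and Case~2 (some vertex $v$ with $G-v$ having fewer than $k$ disjoint cycles) the reduction and the bookkeeping of non-redundant vertices are both correct, using the monotonicity $f(k-1)\le f(k)$. The trouble is entirely in Case~3, the ``robust'' case, and this is not a minor loose end --- it is the heart of the theorem and your sketch does not give an argument that works.

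Concretely, take $B=B_0\cup S$ with $S=\{s_1,\dots,s_k\}$, $s_i\in V(C_i)$, and suppose $v\in B_0\setminus S$ lies on a cycle $C$ with $V(C)\cap B=\{v\}$. You want $k+1$ disjoint cycles. If $C$ meets, say, $C_1$ only, the graph $C\cup C_1$ need not contain two disjoint cycles (a figure-eight, for instance, does not), so there is no local exchange that produces a cycle disjoint from $C$ inside $C\cup C_1$; the ``robustness'' hypothesis --- that $G-w$ still has packing number $k$ for every vertex $w$ --- is a global statement and does not supply the local rerouting you would need. Moreover the choice of the $s_i$ really does matter and you have not specified a rule: e.g.\ with two triangles $abc$, $def$ plus one vertex $u$ joined to all six, one is in the robust case with $k=2$, a natural $B_0=\{u,a,d\}$, and choosing $s_1=a$, $s_2=d$ gives $B=B_0$ which is \emph{not} $2$-redundant (every $2$-subset of $\{u,a,d\}$ leaves a triangle), while choosing $s_1=b$, $s_2=e$ works. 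You would also need to handle the possibility that some $V(C_i)\subseteq B_0$ (possible since $f(k)=\Theta(k\ln k)\gg 3k$), leaving no freedom in choosing $s_i$ outside $B_0$.

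The paper avoids all of this by a different device. Rather than case-splitting on the effect of single-vertex deletions, it proves a lemma (Lemma~\ref{lem.3}): given any blocker $Q$, there are sets $S\subseteq Q$ and $A\subseteq V(G)\setminus Q$, each of size at most $k$, such that no cycle of $G-S$ meets $Q\cup A$ in at most one vertex; then $B=Q\cup A$ is the desired $k$-redundant blocker. The induction on $k$ there proceeds by rooting a tree $T$ of the forest $G-Q$, choosing a vertex $u$ of $T$ at maximum depth such that $G[V(T_u)\cup\{x\}]$ contains a cycle for some $x\in Q$, and recursing on $G-(V(T_u)\cup\{z\})$. The crucial point is that the auxiliary vertices $A$ come from the \emph{forest} side and are chosen by this extremal-depth argument, not from a maximum cycle packing, and they end up being the \emph{redundant} additions (the at-most-$k$ non-redundant vertices all lie inside the original $Q$). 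That structure is essentially opposite to what you propose, where $S$ is meant to be the non-redundant part and comes from the packing cycles. I would recommend abandoning the packing-transversal idea and working instead with a blocker plus a tree-based augmentation.
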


 The above results yield asymptotic properties
 of typical graphs without $k+1$ disjoint cycles.  We state three theorems.
 First we note that with high probability $k$ vertices really stand out
 -- they each have degree about $n/2$ whereas each other vertex has much smaller degree --
 and they form the only minimal blocker of sublinear size.
 We write $R_n \inu \ca$
  to mean that the random graph $R_n$
 is sampled uniformly from the graphs in $\ca_n$.

\begin{theorem} \label{thm.standout}
  There is a constant $\delta>0$ such that the following holds.
  Let $k$ be a positive integer.  
  For $n=1,2,\ldots$ let $R_n \inu \ex(k+1)C$,
  and let $S_n$ be the set of vertices in $R_n$
  with degree $ > n/\ln n$.  Then with probability $1-e^{-\Omega(n)}$ we have:
\begin{description}
  \item{(i)}  $|S_n|=k$ and $S_n$ is a blocker in $R_n$;
  \item{(ii)} each blocker in $R_n$ not containing $S_n$ has size
  $> \delta n$; and
  \item{(iii)} for any constant $\eps>0$, each vertex in $S_n$ has degree between
  $(\frac12 -\eps)n$ and $(\frac12 +\eps)n$.
\end{description}

\end{theorem}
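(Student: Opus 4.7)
The plan is to reduce to a tractable pointed model and then combine degree concentration with a covering/matching argument. By Theorem~\ref{thm.cycles}, $R_n\in\apex^k\cf$ with probability $1-e^{-\Omega(n)}$, so condition on this. The asymptotic count $|(\apex^k\cf)_n|\sim c_k 2^{kn}|\cf_n|$ of Theorem~\ref{thm.apexforest} equals, to leading order, the number of pointed triples $(B,F,H)$ with $B\in\binom{[n]}{k}$, $F$ a forest on $V\setminus B$, and $H$ any choice of edges within $B$ and between $B$ and $V\setminus B$; so almost every $G\in(\apex^k\cf)_n$ has a unique blocker of size $k$, and the uniform distribution on $(\apex^k\cf)_n$ agrees with the pointed distribution---$B$ uniform on $k$-subsets, $F$ an independent uniform random forest on $V\setminus B$, and each potential edge touching $B$ an independent fair coin---up to a $(1+o(1))$ reweighting that preserves $e^{-\Omega(n)}$ failure bounds.

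For (i) and (iii), each $v\in B$ has degree $\bin(n-k,\tfrac12)+O(k)$, which lies in $((\tfrac12-\eps)n,(\tfrac12+\eps)n)$ by Chernoff with failure probability $e^{-\Omega(n)}$, giving (iii); while each $u\in V\setminus B$ has degree $\deg_F(u)+|N(u)\cap B|\le\Delta(F)+k$, and since the maximum degree of a uniform random labelled forest is $O(\ln n/\ln\ln n)$ whp, this is much less than $n/\ln n$. Hence $S_n=B$ whp, proving (i).

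For (ii), take any blocker $B'$ with $B\not\subseteq B'$ and set $W=B'\setminus B$, so $|B'|\ge|W|$. If $|B\setminus B'|\ge 2$, choose distinct $v,v'\in B\setminus B'$: each vertex of $V\setminus B$ lies independently in $Z:=N(v)\cap N(v')\cap(V\setminus B)$ with probability $\tfrac14$, so $|Z|\ge n/5$ whp; any two distinct $u_1,u_2\in Z\setminus W$ would give a $4$-cycle $vu_1v'u_2$ in $R_n-B'$, forcing $|W|\ge|Z|-1$. If instead $B\setminus B'=\{v\}$, let $U=N(v)\cap(V\setminus B)$; for any edge $u_1u_2$ of $F$ with both endpoints in $U\setminus W$ the vertices $v,u_1,u_2$ form a triangle in $R_n-B'$, so $W$ must be a vertex cover of $F[U]$. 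Since forests are bipartite, K\"onig's theorem gives $|W|\ge\mu(F[U])$. A uniform random forest has $\mu(F)\ge c_0 m$ whp for an absolute $c_0>0$ (leaf-removal / Karp--Sipser); fixing such a maximum matching, its edges survive independently in $F[U]$ with probability $1/4$, so Chernoff yields $\mu(F[U])\ge c_0 m/8$ whp. A union bound over the $O(1)$ relevant choices of $v,v'$ and an appropriate small $\delta>0$ complete (ii).

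The main obstacle is this last case: a direct component-counting bound on $F-W$ only yields $|B'|=\Omega(n/\ln n)$, since the maximum degree of $F$ is only $O(\ln n/\ln\ln n)$. The linear bound requires first reducing the separation-by-components condition to the stronger requirement that $W$ cover the edges of the random induced subforest $F[U]$, after which the linearity of the matching number of a uniform random forest delivers the result.
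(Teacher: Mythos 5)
Your argument follows the same overall strategy as the paper: pass to the construction (pointed) model, use Chernoff to concentrate the degrees of the $k$ apex vertices, and show a linear lower bound on any blocker missing an apex vertex by finding linearly many cycles through it that can be killed only one at a time. The pointed-model reduction is exactly what Lemma~\ref{lem.tv} encapsulates. So the route is the same; the differences are in the details, and two of them matter.

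First, the quantitative strength of your auxiliary claims. The theorem demands $1-e^{-\Omega(n)}$, not merely $1-o(1)$, so each ``whp'' in your proof must be an exponential tail bound. For the maximum degree of the random forest you quote ``$O(\ln n/\ln\ln n)$ whp'', but that statement is both stronger than needed and weaker than needed: you only require $\Delta(F_{n-k})\le n/\ln n - k$, but you do require it with exponentially small failure. The paper proves exactly this via the elementary bound $\pr(\Delta(F_m)=j)\le j\bigl(me/j^2\bigr)^j$ (using Lemma~\ref{lem.msw} and deleting the high-degree vertex), which gives $\pr(\Delta(F_m)\ge j)=e^{-\Omega(m)}$ for $j=\Omega(m/\ln m)$. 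Similarly, for the linear matching number you invoke Karp--Sipser; what you actually need is $\pr(\mu(F_{n-k})<c_0 n)=e^{-\Omega(n)}$, and the cleanest route is the paper's Lemma~\ref{lem.spike}: with probability $1-e^{-\Omega(n)}$ the forest has $an$ disjoint spikes, which are themselves a matching. A Karp--Sipser argument can be made to yield an exponential tail via a bounded-differences inequality, but that is extra machinery for a fact the spike lemma already supplies.

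Second, the case split in (ii) is unnecessary. Your Case~2 (vertex-cover of $F[U]$, bounded below by the matching via K\"onig) already proves the theorem with no hypothesis on $|B\setminus B'|$: pick any $v\in B\setminus B'$ (there is at least one, since $B\not\subseteq B'$), and the argument that $W$ must cover the induced subforest on $N(v)\cap(V\setminus B)$ goes through verbatim. The paper phrases the same idea more directly: each apex vertex $j$ is, with probability $1-e^{-\Omega(n)}$, the centre of at least $\delta n$ triangles sharing only $j$, so any blocker omitting $j$ must contain a vertex of each. The $4$-cycle argument in your Case~1 is correct but redundant; dropping it shortens the proof and matches the paper's single uniform argument. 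With those adjustments the proposal is a correct proof.
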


  The second theorem on the random graph $R_n \inu \ex(k+1)C$
  concerns connectivity.
  Recall that the exponential generating function for the class $\ct$ of (labelled)
  trees is $T(z)= \sum_{n \geq 1} n^{n-2}z^n/n!$, and $T(\frac1{e})=\frac12$.
  Also note that by R\'enyi's result~(\ref{eqn.renyi}), for $R_n \inu \cf$ we
  have $\pr (R_n \mbox{ is connected}) \rightarrow e^{-\frac12}$
  as $n \to \infty$.

\begin{theorem}\label{thm.conn}
  Let $k \geq 0$ be an integer, and let
  $p_k = e^{-T(\frac{1}{2^k e})}$.  Then
  for $R_n \inu \ex(k+1)C$ we have
\begin{equation} \label{eqn.pk}
  \pr (R_n \mbox{ is connected }) \rightarrow p_k \;\; \mbox{ as }\; n \to \infty.
\end{equation}
  In particular, $p_0 = e^{-1/2} = 0.606531$ (as we already noted),
  $p_1 = 0.814600$,
  $p_2= 0.907879$,
  $p_3 = 0.953998$ and
  $p_4= 0.977005$ (to 6 decimal places).
\end{theorem}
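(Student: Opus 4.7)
\begin{proofof}{Theorem~\ref{thm.conn}}
The plan is to reduce the question to a product-measure model via the preceding theorems, and then apply Poisson approximation to the tree-component sizes of a uniform random forest together with a thinning argument.

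By Theorems~\ref{thm.cycles} and~\ref{thm.standout}, with probability $1-e^{-\Omega(n)}$ the graph $R_n$ lies in $\apex^k\cf$ and has a unique $k$-blocker, namely its set $S_n$ of high-degree vertices. Since a graph in $\apex^k\cf$ with a specified blocker $S$ of size $k$ is freely given by a forest on $\{1,\ldots,n\}\setminus S$, an arbitrary subset of the edges within $S$, and an arbitrary subset of the edges between $S$ and its complement, the number of such graphs is $2^{\binom k2}\cdot 2^{k(n-k)}\cdot|\cf_{n-k}|$; by R\'enyi's formula~(\ref{eqn.renyi}) and Theorem~\ref{thm.apexforest}, $\binom nk$ times this count is asymptotic to $|(\apex^k\cf)_n|$. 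Thus whp the blocker is unique, and up to total variation $o(1)$ the distribution of $R_n$ is that of the following product model: pick $S\subseteq\{1,\ldots,n\}$ uniformly of size $k$, a uniformly random forest $F$ on the complement, and include each edge incident to $S$ independently with probability $\tfrac12$.

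Under this model, call a tree component of $F$ \emph{isolated} if it has no neighbour in $S$. I claim that whp $R_n$ is connected iff $F$ has no isolated component. One direction is immediate. For the other, each vertex outside $S$ is adjacent to every vertex of $S$ with probability $2^{-k}$, so whp $\Theta(n)$ such "common-neighbour" vertices exist; whp they lie in the giant tree component of $F$ (which whp has $n-O_p(1)$ vertices), thereby attaching every vertex of $S$ to this giant component, to which every non-isolated component of $F$ then also connects via $S$. A standard factorial-moment computation based on~(\ref{eqn.renyi}) shows that for each fixed $t$ the number $N_t$ of tree components of $F$ of size $t$ is asymptotically $\Po(\mu_t)$ with $\mu_t = t^{t-2}/(t!\,e^t)$, jointly independent across distinct $t$. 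Conditional on $F$, the isolation events for distinct components are independent, and a component of size $t$ is isolated with probability $2^{-tk}$. Thinning the Poisson gives that the number of isolated components of size $t$ converges to $\Po(\mu_t 2^{-tk})$, jointly independent across $t$, and so
\[
\pr(R_n\text{ connected})\;\to\; \prod_{t\geq 1} e^{-\mu_t 2^{-tk}} \;=\; e^{-\sum_{t\geq 1}\mu_t 2^{-tk}} \;=\; e^{-T(1/(2^k e))} \;=\; p_k.
\]

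The main technical obstacle is a uniform-in-$n$ tail estimate allowing passage from finitely many $t$ to all $t$: one must show that the expected number of isolated components of size at least $T_0$ in $F$ tends to $0$ as $T_0\to\infty$, uniformly in $n$. For $T_0\leq t\leq\eps n$ one bounds $\E[N_t]$ by a constant times $\mu_t$ and exploits the additional geometric factor $2^{-tk}$; for $t>\eps n$ at most $1/\eps$ such components can exist, and each is isolated with probability $e^{-\Omega(n)}$. Together with the fixed-$t$ Poisson limit and the reduction to the product model, these tail bounds yield the claimed convergence.
\end{proofof}
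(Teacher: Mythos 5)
Your proposal takes a genuinely different route from the paper's proof. The paper verifies the hypotheses of a general black-box result (Lemma~\ref{lem.gen}, quoted from~\cite{cmcd09}) about bridge-addable graph classes: it checks that $\apex^k\cf$ is bridge-addable and smooth with growth constant $2^ke$, and that $\Frag(R^a_n)$ is whp a forest; the lemma then delivers $\pr(R_n\text{ connected})\to e^{-T(1/(2^ke))}$ at once. You instead carry out the Poisson computation by hand: reduce to the product model, characterize connectivity through ``isolated'' tree components, and thin the Poisson-distributed component counts. The constant $T(1/(2^ke))=\sum_{t\geq1}\mu_t 2^{-tk}$ with $\mu_t=t^{t-2}/(t!\,e^t)$ comes out correctly, and this is an appealing elementary alternative that makes the origin of the constant transparent.

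That said, there are genuine gaps. (a) Your central equivalence, ``whp $R_n$ is connected iff $F$ has no isolated component,'' is false for $k=0$: there $S=\emptyset$, so \emph{every} component of $F=R_n$ is isolated, and the ``immediate'' direction already fails whenever $F$ is a single tree. You should restrict to $k\geq1$ (treating $k=0$ as R\'enyi's classical result, as the paper does), or else phrase the equivalence in terms of isolated components of sublinear size so that the unique giant tree is excluded for all $k\geq0$. (b) The joint Poisson convergence of $(N_1,\ldots,N_{T_0})$ and the independent-thinning conclusion are asserted without argument; these are standard but need a factorial-moment computation or a citation. (c) The uniform-in-$n$ tail bound is the real work and is only sketched: one must show $\sup_n\E[\#\{\text{isolated components of size}\geq T_0\}]\to0$ as $T_0\to\infty$, and the claim ``$\E[N_t]\leq C\mu_t$ for $T_0\leq t\leq\eps n$'' should be derived from the asymptotics in~(\ref{eqn.renyi}) (it does hold, since both $m$ and $m-t$ tend to infinity in that range, but you should say why). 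Note too that your $2^{-k\eps n}$ estimate for $t>\eps n$ again needs $k\geq1$. None of these obstacles is fatal, but together they amount to precisely the technical content that Lemma~\ref{lem.gen} of~\cite{cmcd09} packages up, which is why the paper opts to invoke it rather than re-derive it.
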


  The third and final theorem presented here on the random graph $R_n \inu \ex(k+1)C$
  concerns the chromatic number $\chi(R_n)$ and the clique number
  $\omega(R_n)$.  It shows for example in the case $k=2$
  (concerning graphs with no three disjoint cycles) that both
  $\pr (\chi(R_n)=\omega(R_n)=3)$ and $\pr (\chi(R_n)=\omega(R_n)=4)$
  tend to $\frac12$ as $n \to \infty$.

\begin{theorem}\label{thm.chi}
  Let $k$ be a positive integer, and let the random graph $R$ be picked
  uniformly from the set of all graphs on $\{1,\ldots,k\}$.
  For each $n$ let $R_n \inu \ex (k+1)C$.
  Then for each $3 \leq i \leq j \leq k+2$, as $n \to \infty$
\[
  \pr\left( (\omega(R_n)=i) \land (\chi(R_n) = j)\right) \to  
  \pr((\omega(R)=i-2) \land (\chi(R) = j-2))
\]
  (and for other values of $i,j$ the left side tends to 0).
\end{theorem}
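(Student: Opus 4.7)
The plan is to combine Theorem~\ref{thm.standout} (the $k$ stand-out vertices form the only small blocker) with a structural decomposition of $\apex^k\cf_n$ to reduce the statement to properties of $R_n[S_n]$. Any graph in $\apex^k\cf$ with a distinguished blocker $S$ of size $k$ is encoded by a triple $(H,F,\phi)$, where $H=G[S]$, $F=G[V\setminus S]$ is a forest, and $\phi\colon V\setminus S\to 2^S$ records the $S$--neighbourhoods; every such triple yields a valid graph in $\apex^k\cf$. Theorem~\ref{thm.standout} gives that whp $S_n$ is the only blocker of size at most $\delta n$ and hence the unique size--$k$ blocker, so on this event the map $G\mapsto(S_n,H,F,\phi)$ is injective. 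Combined with Theorems~\ref{thm.cycles} and~\ref{thm.apexforest}, a counting argument then shows that, conditional on $S_n$, the pieces $H,F,\phi$ are asymptotically independent and uniform: $H$ is uniform over the $2^{\binom{k}{2}}$ graphs on $S_n$, $F$ is uniform over $\cf_{V\setminus S_n}$, and the $\phi(v)$ are independent uniform subsets of $S_n$. In particular $R_n[S_n]$ is distributed as $R$ up to $o(1)$ total variation error, so it suffices to prove that
\[
\omega(R_n)=\omega(R_n[S_n])+2\quad\text{and}\quad\chi(R_n)=\chi(R_n[S_n])+2\quad\text{whp.}
\]
The upper bounds are deterministic once $|S_n|=k$: any clique meets $V\setminus S_n$ in at most two vertices (no triangles in the forest $R_n-S_n$), and we may properly colour $S_n$ with $\chi(R_n[S_n])$ colours and the forest $R_n-S_n$ with two fresh colours.

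For the matching lower bounds I use the single event
\[
\ck_n:=\bigl\{\,\exists\,u,v\in V\setminus S_n:\; uv\in E(R_n),\; N_{R_n}(u)\cap S_n=N_{R_n}(v)\cap S_n=S_n\,\bigr\}.
\]
If $\ck_n$ holds and $W$ is any maximum clique of $R_n[S_n]$, then $W\cup\{u,v\}$ is a clique of size $\omega(R_n[S_n])+2$; and because $v$ is adjacent to every vertex of $S_n$, the induced subgraph $R_n[S_n\cup\{v\}]\subseteq R_n[N_{R_n}(u)]$ has chromatic number $\chi(R_n[S_n])+1$, whence
\[
\chi(R_n)\geq\chi(R_n[N_{R_n}(u)])+1\geq\chi(R_n[S_n])+2.
\]

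It remains to show $\pr(\ck_n)\to 1$. Under the decomposition, each $v\in V\setminus S_n$ independently satisfies $\phi(v)=S_n$ with probability $2^{-k}$, so the set $A:=\{v:\phi(v)=S_n\}$ has $|A|=(2^{-k}+o(1))n$ whp; conditional on $|A|$ it is a uniformly random subset of $V\setminus S_n$, independent of $F$. A uniformly random labelled forest on $m$ vertices has a bounded expected number of components and hence $m-O_p(1)$ edges, so a linear-sized random subset $A$ contains $\Theta(n)$ forest edges whp, proving $\pr(\ck_n)\to 1$. Assembling the pieces gives $\pr((\omega(R_n),\chi(R_n))=(i,j))=\pr((\omega(H),\chi(H))=(i-2,j-2))+o(1)$, which matches the stated limit because $H$ is distributed as $R$. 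I expect the main technical obstacle to be the first paragraph: making the approximate uniformity of $(H,F,\phi)$ quantitative to $o(1)$ precision, and in particular using the $e^{-\Omega(n)}$ estimates of Theorems~\ref{thm.cycles} and~\ref{thm.standout} to discard the negligible set of graphs for which $S_n$ fails to be the unique size--$k$ blocker (so that the encoding is genuinely bijective on the high-probability event).
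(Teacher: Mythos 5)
Your argument is, in substance, the same as the paper's (very terse) proof: reduce to the explicit construction $(S,H,F,\phi)$ in which the three pieces are independent and uniform, observe deterministically that $\omega$ and $\chi$ of the whole graph exceed those of $R_n[S_n]$ by exactly $2$ once two adjacent vertices of $V\setminus S_n$ are both joined to every vertex of $S_n$, and show that such a pair exists whp. The paper achieves the reduction in one step via Lemma~\ref{lem.tv}, which already says that $R_n$, $R^a_n$ and the uniform construction $R^c_n$ are within $e^{-\Omega(n)}$ in total variation; your first paragraph (invoking Theorems~\ref{thm.standout}, \ref{thm.cycles} and~\ref{thm.apexforest} and then a counting argument to get approximate uniformity of $(H,F,\phi)$ conditional on $S_n$) essentially re-derives that lemma, so you could simply cite it and shorten the proof considerably. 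Your deterministic clique/colouring bounds and the lower-bound argument via the event $\ck_n$ are correct.

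One small gap worth fixing is in the last step. The implication ``$F$ has $m-O_p(1)$ edges, hence a uniformly random linear-sized subset $A$ of its vertex set contains $\Theta(n)$ edges of $F$ whp'' is not valid for an arbitrary forest with that many edges (take $F$ a star: then with probability bounded away from $1$ the centre is outside $A$ and $A$ spans no edge at all). It is true for the random $F$ here, but you need some structural input beyond the edge count. The cleanest fix, consistent with the rest of the paper, is to invoke Lemma~\ref{lem.spike}: whp $F$ has at least $an$ spikes, these are pairwise disjoint pairs of adjacent vertices, and each has both endpoints with $\phi(\cdot)=S_n$ independently with probability $2^{-2k}$, so by a Chernoff bound whp at least one spike lies entirely in $A$, which already gives $\ck_n$. (Alternatively one could use a maximum-degree bound on random forests, but spikes are already available.)
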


  The plan of the rest of the paper is as follows.
  First we prove Theorem~\ref{thm.apexforest} concerning the number of `apex
  forests'.
  Next we prove the `redundant blocker' result,
  Theorem~\ref{thm.redblock}, which we then use in the proof of
  our main result, Theorem~\ref{thm.cycles}.
  After that, we prove the three theorems on the random graph $R_n \inu \ex(k+1)C$,
  namely Theorems~\ref{thm.standout}, \ref{thm.conn} and~\ref{thm.chi}.
  The last main section concerns the case $k=1$ on graphs with no
  two disjoint cycles;
  and finally we make some remarks concerning extensions of the results presented earlier.

  Initial work on this paper was done in 2008 while the first author was
  studying for an MSc 
  at the University of Oxford, with the second author as supervisor.
  It was mainly written while the second author was at the Mittag-Leffler Institute
  during April 2009; and the support of that Institute is gratefully acknowledged.


\section{Counting apex forests: proof of Theorem~\ref{thm.apexforest}}
\label{sec.apex}

  The following lemma will be useful in this section and in
  Section~\ref{sec.main-proof}.
  Call a pair of adjacent vertices in a graph a {\em spike} if it consists of a leaf
  and a vertex of degree 2, which are not contained in a component of just three vertices
  forming a path.
  Observe that distinct spikes are disjoint.

\begin{lemma} \label{lem.spike}
  There exist constants $a>0$ and $b>0$ such that, for $n$ sufficiently large,
  the number of forests $F \in \cf_{n}$ with less than $an$ spikes is less than
  $e^{-bn} |\cf_{n}|$.
\end{lemma}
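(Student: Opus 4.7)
The plan is to combine two ingredients: a linear lower bound on the expected number of spikes in a uniform random forest, and a Cram\'er-style exponential-moment argument that converts this expectation into an exponential tail bound.

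\textbf{Step 1 (first moment).} For $F \inu \cf_n$, let $S(F)$ denote the number of spikes of $F$. By symmetry and linearity of expectation,
\[
\E\, S(F) \;=\; n(n-1)\,\pr\bigl(\{1,2\}\ \text{is a spike of}\ F\bigr).
\]
I would estimate the right-hand probability by counting directly the forests in which vertex $1$ is a leaf adjacent to vertex $2$, vertex $2$ has a unique further neighbour, and the component containing $\{1,2\}$ has at least four vertices. Decomposing such a forest into ``the edge $\{1,2\}$ plus the remainder'' and using the exponential generating functions $T(z)=ze^{T(z)}$ for rooted trees and $F(z)=\exp(T(z)-\tfrac12 T(z)^2)$ for unrooted forests, singularity analysis together with R\'enyi's asymptotic~(\ref{eqn.renyi}) gives that the count of such forests is $\Theta(|\cf_n|/n)$; the correction from excluding $P_3$-components is only $O(|\cf_n|/n^2)$, hence negligible. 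Consequently $\E\, S(F) \geq 2\alpha n$ for some $\alpha>0$ and all sufficiently large $n$.

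\textbf{Step 2 (concentration).} I would then set up the bivariate EGF
\[
\widetilde F(z,u) \;=\; \sum_{n\geq 0}\frac{z^n}{n!}\sum_{F\in\cf_n} u^{S(F)}.
\]
Because a spike is a purely local pattern in the forest (an edge between a leaf and a neighbouring degree-$2$ vertex, modulo the $P_3$-exclusion), $\widetilde F(z,u)$ admits a closed-form expression in terms of $T(z)$ and $u$ by standard marking. Uniform singularity analysis for $u$ in a complex neighbourhood of $1$ gives that the exponential growth rate $\gamma(u) := \limsup_n (n!^{-1}[z^n]\widetilde F(z,u))^{1/n}$ is analytic at $u=1$ with $\gamma(1)=e$ and, by Step~1, $\gamma'(1)/\gamma(1)\geq 2\alpha$. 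Markov's inequality applied to $u^{-S(F)}$ with $u>1$ close to $1$, followed by an optimisation in $u$, then yields $\pr\bigl(S(F)<\alpha n\bigr) \leq e^{-bn}$ for some $b>0$, which is the statement of the lemma with $a=\alpha$.

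\textbf{Main obstacle.} The delicate point is Step~2: one has to write down $\widetilde F(z,u)$ so that the non-local $P_3$-component exclusion is correctly handled, and then verify that the dominant singularity of $\widetilde F(\cdot,u)$ moves analytically as $u$ varies in a neighbourhood of $1$. Both are routine ingredients of analytic combinatorics but require careful bookkeeping. Once they are in place, the first-moment computation and the Cram\'er-type exponential bound are immediate.
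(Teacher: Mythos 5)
Your approach is genuinely different from the paper's, which is a one-liner: it observes that a spike is produced by every pendant appearance of the path $P_3$ rooted at an end-vertex, and then invokes the ``appearances theorem'' (Theorem~5.1 of McDiarmid--Steger--Welsh~\cite{msw06}), which says that for any class with a growth constant, a uniform random member has linearly many pendant appearances of any fixed rooted connected graph, with failure probability $e^{-\Omega(n)}$. That theorem is tailor-made for exactly this kind of lemma and delivers both the linear count and the exponential tail in one stroke, with no generating-function work.

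What you propose is to reprove this special case from scratch via a bivariate EGF and a Cram\'er-type argument. The plan is sound in outline: a first-moment estimate gives $\gamma'(1)/\gamma(1) \geq 2\alpha$, and then $\pr(S(F)<\alpha n) \leq u^{\alpha n}\,\E[u^{-S(F)}]$ for $u>1$ close to $1$ does give exponential decay provided the singularity of $\widetilde F(\cdot,u)$ perturbs analytically in $u$. The weak point is precisely the one you flag: you assert that $\widetilde F(z,u)$ ``admits a closed-form expression in terms of $T(z)$ and $u$ by standard marking,'' but a spike is not a purely root-local feature of a rooted subtree --- whether the pair $\{v,\ell\}$ is a spike depends on the degree of $v$'s \emph{other} neighbour (which must be $\geq 2$ to avoid the $P_3$ exclusion), and this degree changes when a subtree is attached to a parent. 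So a naive substitution into $T=ze^T$ will not work; you need to track the root's degree (and whether a ``pending'' spike is created upon attachment) via a small system of refined generating functions. This is doable, and the uniform singularity analysis would then follow standard lines, but it is considerably more bookkeeping than the phrase ``routine'' suggests, and more work than the paper's black-box citation. In short: your route is correct in spirit and more self-contained, but it re-derives a known general tool for one special instance, and the part you defer (the bivariate EGF together with uniform singularity perturbation) is the genuinely non-trivial content of the argument.
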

\begin{proof}
  Let $H$ be the path of 3 vertices rooted at an end vertex.  By~(\ref{eqn.renyi}) the class
  $\cf$ of forests has a growth constant, namely $e$.
  Thus we may apply the `appearances theorem'
  Theorem~5.1 of~\cite{msw06} to lower bound the number of pendant appearances of $H$ in a
  random forest; and each such appearance yields a spike.
\end{proof}

\medskip

\begin{proofof}{Theorem~\ref{thm.apexforest}}
  By~(\ref{eqn.renyi}) we have
  \begin{equation} \label{eqn.smooth}
  |\cf_n|  \sim (n)_k \ e^{k} |\cf_{n-k}|.
  \end{equation}
  Let $n > k$, let $V=\{1,\ldots,n\}$, and consider 
  the following constructions of graphs on $V$:

\begin{description}
 \item{(i)} Choose a $k$-set $S \subset V$, and put any graph on $S \;$
 ($\: {n \choose k} 2^{\binom k 2}$ choices)
 \item{(ii)} Put any forest $F$ on $V \setminus S \;$ ($\: |\cf_{n-k}|$ choices)
 \item{(iii)} Add the edges of any bipartite graph $H$ with parts $S$
 and $V \setminus S \;$
 ($\: 2^{k(n-k)}$ choices).
\end{description}
  Clearly each graph constructed is in $(\apex^k\cf)_n$, and each graph in
  $(\apex^k \cf)_n$ is constructed at least once.
  By~(\ref{eqn.smooth}) the number of constructions is
  \[
  \binom n k 2^{\binom k 2} 2^{k(n-k)} |\cf_{n-k}| \sim c_k 2^{k n} |\cf_n|
  \]
  so $|(\apex^k \cf)_n|$ is at most this number.

  Let us bound $|(\apex^k \cf)_n|$ from below by showing that almost all of the
  constructions yield distinct graphs. Observe that $G \in (\apex^k \cf)_n$ appears
  just once if and only if $G$ has a unique blocker of size $k$.
  Fix $S = S_0$ for some $k$-set $S_0 \subseteq V$.

  Let us say that a graph $G \in (\apex^k \cf)_n$ is {\em good} if
  (a) $G - S_0 \in \cf$; and (b) for each vertex $s \in S_0$
  the forest $G-S_0$ has $k+1$ spikes
  such that $s$ is adjacent to both vertices in each of these spikes,
  and so forms a triangle with each.
  If $G$ is good then $S_0$ must be the unique blocker of size $k$ in $G$.
  For if $S'$ is another blocker,
  and $s \in S_0\setminus S'$, then $S'$ must contain a vertex from each
  spike in $G-S_0$ which forms a triangle with $s$,
  and so $|S'| \ge k+1$.

  By Lemma~\ref{lem.spike},
  there exist constants $a>0$ and $b>0$ such that
  (assuming $n$ is sufficiently large)
  the number of forests $F \in \cf_{n-k}$ with less than $an$
  spikes is less than $e^{-bn} |\cf_{n-k}|$.
  But if $F$ has at least $an$ spikes then there are at most
\[
  2^{k(n-k)} k \ \pr\left(\bin \left(\lceil an\rceil, 1/4\right) \le k \right)
\]
  ways to choose the bipartite graph $H$ in step (iii)
  so that the resulting graph constructed is not good.
  Hence, by considering separately the cases when $F$ has $<an$ spikes
  and when $F$ has $\geq an$ spikes,
  we see that the number of ways to choose $F$ and $H$
  so that the resulting graph is constructed more than once is at most
\[
  2^{k(n-k)} |\cf_{n-k}| \left(e^{-bn}\!+
  k\ \pr\!\left(\!\bin (\lceil an \rceil,\frac14) \le k \right)\right)
  \; = \; 2^{kn} |\cf_{n}| \ e^{-\Omega(n)}
  \]
  by a standard Chernoff bound for the binomial distribution.
  Summing over all sets $S$ and all graphs on $S$, we see that the total number
  of constructions that fail to yield a unique graph also has the upper bound
  \begin{equation} \label{eqn.ub1}
    2^{kn} |\cf_{n}| \ e^{-\Omega(n)}
  \end{equation}
  which completes the proof.
\end{proofof}


\section{Redundant blockers: proof of Theorem~\ref{thm.redblock}}
\label{sec.redblock}

  \noindent
  We will deduce Theorem~\ref{thm.redblock} easily from the following lemma.

\begin{lemma}\label{lem.3}
  Let $k \geq 0$, let $G \in \ex (k+1)C$, and let $Q$ be a blocker in $G$.
  Then there are sets $S \subseteq Q$ with $|S| \le k$ and
  $A \subseteq V(G) \setminus Q$ with $|A| \le k$ such that
  there is no cycle $C$ in $G-S$ with
  $|V(C) \cap (Q \cup A)| \leq 1$.
\end{lemma}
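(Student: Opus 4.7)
\noindent The plan is to prove the lemma by induction on $k$. The base case $k=0$ is immediate: $G$ has no cycle, so $S=A=\emptyset$ satisfies the conclusion.

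For the inductive step, call a cycle $C$ of $G$ a \emph{$Q$-simple cycle} when $|V(C)\cap Q|=1$. If $G$ has no $Q$-simple cycle then $S=A=\emptyset$ already works, since every cycle meets $Q$ in at least two vertices. Otherwise, pick a maximum family $\mathcal{F}=\{C_1,\ldots,C_m\}$ of pairwise vertex-disjoint $Q$-simple cycles in $G$; the $C_i$'s form $m$ pairwise disjoint cycles of $G\in\ex(k+1)C$, so $m\le k$. Let $s_i$ denote the unique vertex of $V(C_i)\cap Q$, put $S=\{s_1,\ldots,s_m\}$, and write $P_i=V(C_i)\setminus\{s_i\}$; each $P_i$ is a path in the forest $F:=G-Q$. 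The set $A$ will be of the form $\{a_1,\ldots,a_m\}$ with $a_i\in P_i$.

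A hypothetical cycle $C^*$ in $G-S$ violating the conclusion must be $Q$-simple in $G$ (because $Q$ blocks every cycle), with $V(C^*)\cap Q=\{q^*\}$ for some $q^*\in Q\setminus S$ and $V(C^*)\cap A=\emptyset$. Maximality of $\mathcal{F}$ forces $V(C^*)$ to meet $V(\mathcal{F})$, and since $q^*\notin S$ and $s_i$ is the only $Q$-vertex of $C_i$, the intersection lies in $\bigcup_i P_i$. The non-$Q$ part $V(C^*)\setminus\{q^*\}$ and each $P_i$ are both paths in the forest $F$, so their intersection is a subpath of $P_i$. The problem therefore reduces to choosing one vertex $a_i\in P_i$ per $i$ that hits every subpath of $P_i$ arising from an offending $C^*$.

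The main obstacle is achieving this with $|A|\le k$. For an arbitrary maximum family no single $a_i\in P_i$ need cover all offending $C^*$ entering $P_i$, so $\mathcal{F}$ itself has to be chosen carefully. My plan is to take $\mathcal{F}$ to be a maximum family of pairwise disjoint $Q$-simple cycles that additionally minimizes the total length $\sum_i|V(C_i)|$. With this choice I expect a rerouting argument to show that either a suitable $a_i$ exists for every $i$, or one can splice an offending $C^*$ with a portion of some $C_j$ along $F$ to produce either a new $Q$-simple cycle vertex-disjoint from $\mathcal{F}$ (contradicting maximality) or a replacement yielding a strictly shorter total length (contradicting minimality). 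Making this splicing step rigorous, while checking that the spliced object is a bona fide cycle and not merely a theta-subgraph, is the heart of the argument and is where the packing hypothesis $m\le k$ feeds back into the construction of $A$.
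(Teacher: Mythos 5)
Your plan is genuinely different from the paper's proof, but it has a real gap that you yourself flag, and I don't think the gap closes easily.

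The paper also inducts on $k$, but in a quite different way. In the inductive step it fixes a tree $T$ of $G-Q$ and a root $r$, lets $T_v$ be the subtree rooted at $v$, and chooses $u$ of \emph{maximum depth} among vertices $v$ for which $G[V(T_v)\cup\{x\}]$ contains a cycle for some $x\in Q$. Picking such an $x=z$, it removes $V(T_u)\cup\{z\}$ from $G$, drops $z$ from $Q$, applies the inductive hypothesis to get $S',A'$ of size $\le k-1$, and sets $S=S'\cup\{z\}$, $A=A'\cup\{u\}$. A violating cycle $C$ in $G-S$ with $|V(C)\cap(Q\cup A)|\le 1$ must meet $Q$ in a unique $x$ and miss $u$; if it also missed $V(T_u)$ it would contradict the inductive hypothesis, so $C-x$ is a path in $T$ meeting $T_u$ but avoiding $u$ --- hence contained in a \emph{proper} subtree $T_w\subsetneq T_u$, contradicting the maximal depth of $u$. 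This depth-extremal choice is doing exactly the work your ``one $a_i$ per $P_i$'' step is supposed to do, but it sidesteps the issue by deleting a whole subtree and recursing, instead of trying to cover all offenders in $P_i$ with a single vertex.

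The genuine gap in your proposal is the final step. After restricting to a maximum family $\mathcal F$ of disjoint $Q$-simple cycles, several offending cycles $C^*$ can meet $P_i$ in pairwise \emph{disjoint} subpaths (they can all share a single $Q$-vertex $q^*\notin S$, so they need not be disjoint cycles and do not enlarge $\mathcal F$). Then no single $a_i\in P_i$ blocks them all, and $|A|\le k$ is threatened. You propose to rule this out by choosing $\mathcal F$ of minimum total length and ``rerouting/splicing'', but you do not exhibit the new cycle, verify it is disjoint from $\mathcal F\setminus\{C_i\}$, or verify it is actually shorter; and you note yourself that the spliced object may be a theta-graph rather than a cycle. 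As it stands this is a plausible-sounding heuristic, not a proof, and it is not at all clear that a minimum-length maximum family has the Helly-type property you need on each $P_i$. I would recommend either carrying out that rerouting argument in full detail (with a careful case analysis on how the offending subpaths sit on $P_i$ relative to its endpoints $p_i^1,p_i^2$, the two neighbours of $s_i$ on $C_i$), or switching to the paper's inductive deepest-subtree scheme, which avoids the difficulty cleanly.
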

  \noindent
  Note that the conclusion of the lemma is equivalent to saying that the graph
  $G-((Q \setminus \{x\})\cup A)$ is acyclic for each vertex
  $x \in Q \setminus S$; that is,
  each vertex $x \in Q \setminus S$ has at most one edge to each tree
  in the forest $G-(Q\cup A)$.
  \medskip

\begin{proof} \hspace{.05in}
  We use induction on $k$.
  Clearly the result holds for the case $k=0$, as we may take $A=S=\emptyset$.
  Let $j \ge 1$ and suppose that the result holds for $k=j-1$.
  Let $G \in \ex (j+1)C$ and let $Q$ be a blocker in $G$.
  We may assume that for some tree $T$ in the forest $G-Q$,
  and some vertex $x \in Q$, the induced subgraph
  $G[V(T)\cup\{x\}]$ has a cycle (as otherwise we may again take
  $A=S=\emptyset$ and we are done).

  Fix one such tree $T$, and fix a root vertex $r$ in $T$.  For each
  vertex $v$ in $T$ let $T_v$ denote the subtree of $T$ rooted at $v$.
  (Thus $T_r$ is $T$.) Let
\[
  R = \{v\in V(T):\ G[V(T_v)\cup\{x\}] \mbox{ has a cycle for some } x\in Q\}.
\]
  By our assumption $R \neq \emptyset$.
  In the tree $T$, choose a vertex $u \in R$ at maximum distance from
  the root $r$.
  Let $z \in Q$ be such that $G[V(T_u)\cup\{z\}]$ has a cycle.

  Let $G' = G - (V(T_u) \cup \{z\})$ and let
  $Q' = Q \setminus \{z\}$.
  Then clearly $G' \in \ex(j C)$ and $Q'$ is a blocker in $G'$.
  Hence we can apply the induction hypothesis
  to $G'$ and $Q'$, and obtain sets of vertices $S' \subseteq Q'$
  and $A' \subseteq V(G') \setminus Q'$ each of size
  at most $j-1$, such that
  there is no cycle $C$ in $G'-S'$ with
  $|V(C) \cap (Q' \cup A')| \leq 1$.


  Now set $S = S' \cup \{z\}$ and $A = A' \cup \{u\}$.
  Suppose that there is a cycle $C$ in $G-S$ with
  $|V(C) \cap (Q \cup A)| \leq 1$.
  %
  We want to find a contradiction, since that will establish the induction step,
  and thus complete the proof of the lemma.

  Note that $C$ must have a vertex in the blocker $Q$: so we may
  let $x \in Q$ be the unique vertex in $V(C) \cap (Q \cup A)$.
  It follows that $u \not\in V(C)$.
  But $V(C) \cap V(T_u)$ cannot be empty: for then
  $C$ would be a cycle in $G'-S'$,
  and by induction we would have
  $2 \leq |V(C) \cap (Q' \cup A')| \leq |V(C) \cap (Q \cup A)|$.

  Hence the connected graph $C\!-\!\{x\}$
  is a subgraph of $T$ with a vertex in $T_u$ but not containing $u$.
  Therefore $C\!-\!\{x\}$ must be contained in a proper subtree $T_w$ of $T_u$;
  but this implies that $w \in R$, which contradicts our choice of $u$.
\end{proof}
  \medskip

\begin{proofof}{Theorem~\ref{thm.redblock}}
  Let $k \geq 0$ and let $G \in \ex(k+1)C$.
  Let $Q$ be a blocker in $G$ of size at most $f(k)$.
  By Lemma~\ref{lem.3}, there are sets $S \subseteq Q$ with $|S| \le k$ and
  $A \subseteq V(G) \setminus Q$ with $|A| \le k$ such that
  there is no cycle $C$ in $G-S$ with
  $|V(C) \cap (Q \cup A)| \leq 1$.

  Then the set $B=Q \cup A$ is as required.
  For, given $v \in B \setminus S$,
  there cannot be a cycle $C$ in $G-(B \setminus \{v\})$, since
  $C$ would be a cycle in $G-S$ with $|V(C) \cap B| \leq 1$;
  and thus $B \setminus \{v\}$ is a blocker.
\end{proofof}


\section{Proof of the main theorem, Theorem~\ref{thm.cycles}}
\label{sec.main-proof}

  If the random variable $X$ has the Poisson distribution with mean~1,
  then for each positive integer $t$ we have $\pr[X \geq t] \leq 1/t!$.  Hence
  by Theorem 2.1 of McDiarmid, Steger, Welsh (2006)~\cite{msw06}
  applied to the class $\cf$ of forests we have:

\begin{lemma}  \label{lem.msw}
  For each positive integer $t$
  \[
    |\{F \in \cf_n: \kappa(F) \geq t+1\}| \leq |\cf_n|/t! .
  \]
\end{lemma}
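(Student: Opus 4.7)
The plan is to apply Theorem 2.1 of McDiarmid, Steger and Welsh~\cite{msw06} directly to the class $\cf$ of forests. That theorem says that if a class $\ca$ of graphs is \emph{bridge-addable} --- meaning that whenever $G \in \ca$ and $u,v$ lie in different components of $G$ we have $G + uv \in \ca$ --- then for $R_n \inu \ca_n$ the random variable $\kappa(R_n) - 1$ is stochastically dominated by a Poisson random variable $X$ with mean $1$.

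First I would record that $\cf$ is bridge-addable: adding an edge between two distinct components of an acyclic graph cannot create a cycle, so the result is again a forest. Hence the MSW theorem applies with $\ca = \cf$, and combining the stochastic domination with the Poisson tail bound $\pr[X \geq t] \leq 1/t!$ quoted in the preamble yields
\[
\frac{|\{F \in \cf_n : \kappa(F) \geq t+1\}|}{|\cf_n|} \;=\; \pr[\kappa(R_n) \geq t+1] \;\leq\; \pr[X \geq t] \;\leq\; \frac{1}{t!},
\]
where $R_n$ is drawn uniformly from $\cf_n$. Multiplying through by $|\cf_n|$ gives the claimed counting bound.

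There is essentially no obstacle here: the lemma is an immediate corollary of the MSW theorem together with the Poisson tail bound, once one observes the (trivial) bridge-addability of $\cf$. The real work was done in~\cite{msw06}.
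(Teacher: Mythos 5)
Your proposal is correct and follows exactly the paper's own (very terse) argument: the paper likewise invokes Theorem~2.1 of McDiarmid, Steger and Welsh together with the Poisson tail bound $\Pr[X \geq t] \leq 1/t!$, with the bridge-addability of $\cf$ left implicit. You have merely made that hypothesis-check explicit, which is a reasonable thing to spell out.
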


  The idea of the proof of Theorem~\ref{thm.cycles}
  is to give constructions which yield
  every graph in $(\ex (k\!+\!1)C)_n$ at least once
  (as well as other graphs);
  show that there are few `unrealistic' constructions;
  and show that few `realistic' constructions yield a graph
  in $(\ex (k\!+\!1)C \setminus \apex^k \cf)_n$.
  \medskip

\begin{proofof} {Theorem~\ref{thm.cycles}}
  Fix a positive integer $k$.
  By Theorem~\ref{thm.redblock}, there is an integer $r \leq f(k)+k$
  such that the following holds.
  For each graph $G$ in $\ex (k\!+\!1)C$ with at least $r$ vertices,
  there is a blocker $R$ of size $r$ and a subset $S$ of $R$
  of size $k$ such that $R \setminus v$ is still a blocker
  for each vertex $v \in R \setminus S$.

  Let $n > r$.
 Then the following constructions will yield every graph in
 $(\ex (k\!+\!1)C)_n$ at least once (as well as other graphs).
 %
\begin{description}
\item{(i)} Choose an $r$-subset $R \subseteq V$, put any graph on $R$,
  and choose a $k$-subset $S \subseteq R \; $
  ($\: {n \choose r} 2^{r \choose 2} {r \choose k} = O(n^r)$ choices)


\item{(ii)} Add the edges of any bipartite graph 
  with parts $S$ and $V \setminus R \;$
  ($\: 2^{k(n-r)}$ choices)

\item{(iii)} Put any forest $F$ on $V \setminus R \; $ ($\: |\cf_{n-r}|$ choices)

\item{(iv)} Add the edges of any bipartite graph 
  with parts $R \setminus S$ and $V \setminus R$,
  subject to the restriction that each $v \in R \setminus S$ has at most one
  edge to each component tree of the forest $F$ on $V \setminus R$.
\end{description}

  We want upper bounds on the number of constructions.
  %
  By the restriction in (iv) above, for each vertex $v \in R \setminus S$,
  the number of edges between $v$ and the vertices in $V \setminus R$ is at most
  $\kappa(F)$.  Let $t=t(n) \sim n (\ln n)^{-\frac12}$.
  Then by Lemma~\ref{lem.msw}
\[
  |\{F \in \cf_{n-r}: \kappa(F) \geq t \}| \leq |\cf_{n-r}|/(t-1)!
  \leq |\cf_{n}| \ e^{-\Omega(n (\ln n)^{\frac12})}.
\]
  Call a construction {\em realistic} if there are at most
  $t$ edges between each vertex $v \in R \setminus S$ and the vertices
  in $V \setminus R$; and {\em unrealistic} otherwise.
  Then the number of unrealistic constructions
  is at most
\[
  O(n^r) \ 2^{kn} \ |\cf_{n}| \ 2^{(r-k)(n-r)} \ e^{-\Omega(n (\ln n)^{\frac12})} =
  |\cf_{n}| \ e^{-\Omega(n (\ln n)^{\frac12})}.
\]
  Thus there are relatively few unrealistic constructions, and
  we see that we need to consider only realistic constructions.
  %
  Further, since $t=o(n)$, for $n$ sufficiently large
\[
  \sum_{i=0}^{t} {n-r \choose i} \leq 2 {n-r \choose t} \leq 2 \left( \frac{ne}{t} \right)^t;
\]
  and so, in realistic constructions, the number of choices for step (iv) is
\[
  \left( \sum_{i=0}^{t} {n-r \choose i} \right)^{r-k}
    \leq 2^r \left(\frac{ne}{t}\right)^{tr} = (1+o(1))^n.
\]

  Let us bound the number of realistic constructions yielding
  a graph $G$ in $(\ex (k\!+\!1)C \setminus \apex^k \cf)_n$.
  Each such construction has a cycle contained in $V \setminus S$;
  and such a cycle $C$ can touch at most $2(r-k)$ spikes,
  since as we travel around $C$ we must visit $R \setminus S$ at least
  once between any three visits to distinct spikes.

  Now suppose that each vertex in S is adjacent to both
  vertices of at least $2r-k$ spikes.
  Then the $k$ vertices in $S$ would each form triangles with
  at least $2r-k-2(r-k)=k$ spikes disjoint from $C$;
  and amongst these triangles we could find $k$ disjoint ones
  (for example by picking the triangles greedily).
  But together with $C$ there would now be at least $k+1$ disjoint cycles,
  contradicting the assumption that $G \in \ex (k\!+\!1)C$.
  Hence, for at least one vertex $v$ in S, $v$ must be adjacent to both
  vertices of at most $2r-k-1 \leq 2r$ spikes.

  Therefore, given any choices at steps (i),(iii) and (iv),
  if $F$ has $z$ spikes then the number of choices at step (ii) to obtain
  a graph in $(\ex (k\!+\!1)C \setminus \apex^k\cf)_n$ is at most
  \[
  2^{k(n-r)} \ k\ \pr[\bin ( z , 1/4) \le 2r].
  \]

  By Lemma~\ref{lem.spike},
  there exist constants $a>0$ and $b>0$ such that
  (assuming $n$ is sufficiently large)
  the number of graphs $F \in \cf_{n-r}$ with less than $an$ spikes
  is at most $e^{-bn} |\cf_{n-r}|$.
  Hence, by considering separately the cases when $F$ has $<an$ spikes
  and when $F$ has $\geq an$ spikes,
  we see that the number of realistic constructions which yield a graph in
  $(\ex (k\!+\!1)C \setminus \apex^k \cf)_n$ is at most
\begin{eqnarray*}
  && O(n^r) \ 2^{k(n-r)} 2^r \left(\frac{ne}{t}\right)^{tr} |\cf_{n-r}|
  \left(e^{-bn} +k\ \pr[\bin (\lceil an \rceil, 1/4) \le 2r] \right)\\
  & = & e^{-\Omega(n)} 2^{kn} \ |\cf_n| \;\; = \;\; e^{-\Omega(n)} |(\apex^k \cf)_n|
\end{eqnarray*}
  by a Chernoff bound as before, and by Theorem~\ref{thm.apexforest}.
\end{proofof}


\section{Proofs for random graphs $R_n$}
\label{sec.random}

  In this section we prove Theorems~\ref{thm.standout}, \ref{thm.conn}
  and~\ref{thm.chi}.
  The following lemma makes the task more straightforward.  Recall
  that the total variation distance $d_{TV}(X,Y)$ between two random
  variables $X$ and $Y$ is the supremum over all events $A$ of
  $|\pr(X \in A) - \pr(Y \in A)|$.

\begin{lemma} \label{lem.tv}
  Let $k$ be a positive integer.
  Let $R_n \in_u \ex(k+1)C$; let $R^a_n \in_u \apex^k \cf$; and
  let $R^c_n$ denote the graph which is the result of a construction
  as in the proof of Theorem~\ref{thm.apexforest},
  where the steps are chosen uniformly at random.
  If $X_n$ and $Y_n$ are any two of these random variables, then
  the total variation distance between them satisfies
  \begin{equation} \label{eqn.rndash}
    d_{TV}(X_n,Y_n) = e^{-\Omega(n)}.
  \end{equation}
\end{lemma}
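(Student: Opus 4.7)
The plan is to bound each of the three pairwise total variation distances separately, using the quantitative counting estimates already established, and then to deduce the third bound by the triangle inequality. Write $L=|(\ex (k\!+\!1)C)_n|$, $M=|(\apex^k\cf)_n|$ and $N$ for the total number of outputs of the construction in steps (i)--(iii) of the proof of Theorem~\ref{thm.apexforest}; for $G\in(\apex^k\cf)_n$ let $\alpha_G$ denote the number of $k$-blockers of $G$, so that $\alpha_G$ equals the number of constructions that produce $G$, $\pr(R^c_n=G)=\alpha_G/N$, and $\sum_G\alpha_G=N$.

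The easiest pair is $R_n$ versus $R^a_n$. Since $\apex^k\cf\subseteq \ex (k\!+\!1)C$, Theorem~\ref{thm.cycles} gives $L=(1+\eps_n)M$ with $\eps_n=e^{-\Omega(n)}$; splitting the sum that defines $d_{TV}$ over $(\apex^k\cf)_n$ and over $(\ex (k\!+\!1)C)_n\setminus(\apex^k\cf)_n$ then yields $d_{TV}(R_n,R^a_n)=\eps_n/(1+\eps_n)=e^{-\Omega(n)}$.

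For $R^c_n$ versus $R^a_n$ the argument starts from the upper bound~(\ref{eqn.ub1}) in the proof of Theorem~\ref{thm.apexforest}, which is exactly the statement $N-M\le 2^{kn}|\cf_n|\,e^{-\Omega(n)}$ (the number of constructions beyond the first produced by each $G$); combined with $M\sim c_k 2^{kn}|\cf_n|$ this gives $N/M=1+e^{-\Omega(n)}$. Let $\cg^*_n=\{G\in(\apex^k\cf)_n:\alpha_G=1\}$. Since each $G$ with $\alpha_G\ge 2$ contributes at least $1$ to $\sum_G(\alpha_G-1)=N-M$, we have $M-|\cg^*_n|\le N-M$, so $\cg^*_n$ carries mass $1-e^{-\Omega(n)}$ under both $R^c_n$ and $R^a_n$. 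On $\cg^*_n$ the two probabilities $1/N$ and $1/M$ differ by a factor $1+e^{-\Omega(n)}$, contributing $e^{-\Omega(n)}$ to $d_{TV}$; the remaining contribution is absorbed by the small mass on $(\apex^k\cf)_n\setminus \cg^*_n$ in each distribution. Hence $d_{TV}(R^c_n,R^a_n)=e^{-\Omega(n)}$, and then $d_{TV}(R^c_n,R_n)=e^{-\Omega(n)}$ by the triangle inequality.

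No substantial obstacle is anticipated: the argument is essentially a repackaging of estimates already inside the proofs of Theorems~\ref{thm.apexforest} and~\ref{thm.cycles}. The one point that needs care is the clean identification of $N-M$ with the number of constructions discarded as failures of uniqueness in the proof of Theorem~\ref{thm.apexforest}; once this is set up, the uniform bound on the mass outside $\cg^*_n$ falls out of a one-line counting argument and feeds directly into the total variation estimate.
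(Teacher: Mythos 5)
Your proof is correct and takes essentially the same route as the paper, which disposes of the lemma in two sentences by invoking Theorem~\ref{thm.cycles} for the pair $(R_n,R^a_n)$ and Theorem~\ref{thm.apexforest} together with~(\ref{eqn.ub1}) for the pair $(R^a_n,R^c_n)$; your writeup simply makes explicit the elementary counting that those citations compress (the exact value $\eps_n/(1+\eps_n)$ for nested uniform distributions, and the bound on $N-M$ and on the mass of graphs with more than one construction). One small point of precision: (\ref{eqn.ub1}) bounds the number of constructions failing to yield a unique graph, i.e.\ $\sum_{\alpha_G\ge 2}\alpha_G$, which is an upper bound for, rather than equal to, $N-M=\sum_G(\alpha_G-1)$; since you only need the inequality, this does not affect the argument.
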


\begin{proof}
  Theorem~\ref{thm.cycles} gives
  $d_{TV}(R_n,R^a_n) = e^{-\Omega(n)}$;
  and Theorem~\ref{thm.apexforest} and the inequality~(\ref{eqn.ub1}) give
   $d_{TV}(R^a_n,R^c_n) = e^{-\Omega(n)}$.
\end{proof}
\bigskip

\begin{proofof}{Theorem~\ref{thm.standout}}
  By Lemma~\ref{lem.tv}, we may work with $R^c_n$ 
  rather than with $R_n$.
  Let $F_m \inu \cf_m$ for $m=1,2,\ldots$.
  %
  If positive numbers $n_1,\ldots,n_j$ sum to at most $m$
  then $\prod_i n_i \leq \left(\frac{m}{j}\right)^j$.  Also, if vertex 1 has degree
  $j$ in $F_m$ and we delete this vertex then we obtain a forest
  with at least $j$ components.
  Thus by considering the component sizes in $F_{m-1}$, and using Lemma~\ref{lem.msw}
\begin{eqnarray*}
  \pr(\Delta(F_{m}) =j)
 & \leq &
  m \cdot \left(\frac{m}{j}\right)^j \frac{|\cf_{m-1}|}{(j-1)!}\ \frac{1}{|\cf_m|} \\
 & \leq &
  j \left( \frac{m}{j} \right)^j \frac{1}{j!}
  \;\;\; \mbox{ since } m |\cf_{m-1}| \leq |\cf_m|\\
 & \leq &
  j \left(\frac{m e}{j^2}\right)^j \;\;\; \mbox{ since }\; j! \geq (j/e)^j.
\end{eqnarray*}
  Hence  $\pr(\Delta(F_{m}) \geq j)= e^{-\Omega(m)}$ if $j = \Omega(m/\ln m)$.

  The key observation now is that
  \[
    \pr(S_n \not\subseteq S) \leq \pr(\Delta(F_{n-k}) >  n/\ln n -k),
  \]
  and so by the above $\pr(S_n \not\subseteq S) = e^{-\Omega(n)}$.
  %
  But the number of constructions with $S_n$ a proper subset of $S$ is at most
  $2^{(k-1)n + o(n)} |\cf_{n-k}|$, which is $2^{-n+o(n)}$ times the number of constructions,
  and hence
  $\pr(S_n=S) = 1-  e^{-\Omega(n)}$.

  We have now dealt with statement (i) in the theorem, so let us consider statement (ii).
  By Lemma~\ref{lem.spike}, there exists $\delta>0$ such that
  the probability that $F_{n-k}$ has a matching of size at least $5 \delta n$ is
  $ 1-  e^{-\Omega(n)}$;
  and given such a matching, for each $j \in S$, the probability that $j$ fails to be
  the central vertex of at least $\delta n$ otherwise disjoint triangles is at most
  \[
    \pr\left(\bin( \lceil 5 \delta n \rceil, \frac14) < \delta n \right) = e^{-\Omega(n)}
  \]
  by a Chernoff bound.
  But if vertex $j$ is the central vertex of at least $\delta n$
  otherwise disjoint triangles, then any blocker not containing $j$ must
  have size at least $\delta n$.  This deals with statement (ii).

  Finally, for statement (iii), a Chernoff bound
  shows that the number of constructions such that (iii) fails is
  $2^{kn -\Omega(n)} |\cf_{n-k}|$;
  and it follows that (iii) holds with probability $1-e^{-\Omega(n)}$.
\end{proofof}
\bigskip

  In order to prove Theorem~\ref{thm.conn} we shall use Lemma 4.3 of~\cite{cmcd09}.
  We need some definitions to present that lemma (in a simplified form).

  Given a graph $G$ on $\{1,\ldots,n\}$
  let $\Bigc(G)$ denote the (lexicographically first) component of $G$
  with the most vertices, and let $\Frag(G)$ denote the graph induced on
  the vertices not in $\Bigc(G)$.  Let $\ca$ be a class of graphs.
  We say that $\ca$ is {\em bridge-addable} if given any graph in $\ca$
  and vertices $u$ and $v$ in distinct components of $G$,
  the graph obtained from $G$ by adding an edge joining $u$ and $v$ must be in $\ca$.
  Given a graph $H$ in $\ca$, we say that $H$ is {\em freely addable}
  to $\ca$ if, given any graph $G$ disjoint from $H$,
  the union of $G$ and $H$ is in $\ca$ if and only if $G$ is in $\ca$.
  We say that the class $\ca$ is {\em smooth}
  if $\ca$ has growth constant $\gamma$ and
  $\frac{|\ca_n|}{n |\ca_{n-1}|} \to \gamma$ as $n \to \infty$.
  Finally, note our standard convention that for the class $\ca$
  we will use $A(z)$ to denote its exponential generating function
  $\sum_{n \geq 0}|\ca_n| z^n/n!$.
  \begin{lemma} {(McDiarmid~\cite{cmcd09})}
  \label{lem.gen}
  Let the graph class $\ca$ be bridge-addable;
  let $R_n \inu \ca_n$;
  let $\cb$ denote the class of all graphs freely addable to~$\ca$;
  and suppose that
  $\pr(\Frag(R_n) \in \cb) \to 1$ as $n \to \infty$.
  Suppose further that $\ca$ is smooth,
  with growth constant $\gamma$.
  Let $\cc$ denote the class of connected graphs $\cb$.
  Then $C(1/\gamma)$ is finite, and
  \[
   \pr[R_n\mbox{ is connected }] \;\to\;  e^{-C(1/\gamma)} \;\;
   \mbox{ as } n \to \infty.
  \]
\end{lemma}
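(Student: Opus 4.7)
The plan is to establish a convolution-type identity between $|\ca_n|$, $|\tilde{\ca}_n|$ (connected graphs in $\ca$), and $|\cb_m|$, and then extract the limit of $p_n := |\tilde{\ca}_n|/|\ca_n|$ via smoothness together with the exponential-formula identity $B(z) = e^{C(z)}$.

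First I note that $\cb \subseteq \ca$ (setting $G = \emptyset$ in the definition of freely addable shows $H \in \ca$ for every $H \in \cb$), and hence $\cc \subseteq \tilde{\ca}$. The McDiarmid--Steger--Welsh bridge-addable lower bound supplies a positive constant $c$ with $p_n \geq c$ for every $n$, so $\tilde{\ca}$ has growth constant $\gamma$ and the EGF $C$ has radius of convergence at least $1/\gamma$.

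Next I set up the key decomposition. Let $\ca_n^{\ast} = \{G \in \ca_n : \Frag(G) \in \cb\}$; by hypothesis $|\ca_n^{\ast}|/|\ca_n| \to 1$. For $G \in \ca_n^{\ast}$ with $|\Frag(G)| = m < n/2$, free addability places $\Bigc(G) \in \tilde{\ca}_{n-m}$, and conversely any triple $(V_1, \tilde G, H)$ with $|V_1| = n-m$, $\tilde G \in \tilde{\ca}$ on $V_1$ and $H \in \cb_m$ on $[n] \setminus V_1$ reassembles uniquely to a graph in $\ca_n^{\ast}$ whose lex-first biggest component is $\tilde G$ (the condition $m < n/2$ ensures $\tilde G$ outsizes every component of $H$). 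Therefore
\[
|\ca_n^{\ast}| \;=\; \sum_{m < n/2} \binom{n}{m}|\tilde{\ca}_{n-m}| \cdot |\cb_m| \;+\; E_n,
\]
where $E_n$ counts graphs with $|\Frag| \geq n/2$. The MSW bound on $\kappa(R_n)$ together with smoothness of $\ca$ forces $E_n = o(|\ca_n|)$.

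Dividing by $|\ca_n|$ and writing $|\tilde{\ca}_{n-m}| = p_{n-m}|\ca_{n-m}|$, I get
\[
1 + o(1) \;=\; \sum_{m < n/2}\frac{\binom{n}{m}|\ca_{n-m}|}{|\ca_n|}\cdot|\cb_m| \cdot p_{n-m}.
\]
For each fixed $m$, smoothness of $\ca$ gives $\binom{n}{m}|\ca_{n-m}|/|\ca_n| \to (m!\gamma^m)^{-1}$. Since $p_{n-m} \geq c > 0$, truncating at $m \leq M$ and letting $n \to \infty$ yields $\sum_{m \leq M}|\cb_m|/(m!\gamma^m) \leq c^{-1}$ for every $M$, hence $B(1/\gamma) \leq c^{-1} < \infty$ and in particular $C(1/\gamma) = \log B(1/\gamma)$ is finite. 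To exchange limit and sum I invoke the tightness of $|\Frag(R_n)|$ in $n$, which follows from the same convolution identity combined with the MSW tail estimate on $\kappa(R_n)$ and smoothness; this lets me truncate at $m \leq M$ with error tending to $0$ as $M \to \infty$ uniformly in $n$. Along any convergent subsequence of $p_n$ with limit $p$, the truncated sum then gives $1 = p \cdot B(1/\gamma) = p \cdot e^{C(1/\gamma)}$, so $p = e^{-C(1/\gamma)}$; as every subsequential limit equals this value, $p_n \to e^{-C(1/\gamma)}$ as required.

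The main obstacle is the tightness of $|\Frag(R_n)|$ uniformly in $n$, needed to justify the limit/sum interchange across the full range $m \leq n/2$ (and thereby rule out probability mass escaping to large fragment sizes); this quantitative control requires a careful combination of bridge-addability with the smoothness hypothesis on $\ca$, going beyond the basic growth-rate estimate.
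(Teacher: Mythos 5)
Note first that the paper does not prove this lemma; it is quoted from McDiarmid~\cite{cmcd09} (Lemma 4.3 there) and used as a black box, so there is no internal proof to compare against.

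Your strategy --- split a graph into its biggest component and its fragment, derive a convolution identity, and pass to the limit via smoothness and the exponential formula --- is the natural one and I believe it matches~\cite{cmcd09} in outline. The convolution identity, the smoothness limit $\binom{n}{m}|\ca_{n-m}|/|\ca_n|\to (m!\gamma^m)^{-1}$, and the deduction $B(1/\gamma)\leq c^{-1}$ are all sound. But the step you yourself flag as the main obstacle --- uniform tightness of $|\Frag(R_n)|$, which you also invoke earlier to make $E_n=o(|\ca_n|)$ --- is a genuine gap, and your suggestion that it comes from ``the MSW bound on $\kappa(R_n)$ together with smoothness'' is too optimistic. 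The $\kappa$-bound controls only the number of components, not the size of the second largest one, and smoothness controls the ratio $|\ca_n|/(n|\ca_{n-1}|)$ but does not by itself preclude a constant fraction of graphs having, say, two components of linear size. What is actually needed, and what~\cite{cmcd09} supplies, is a separate quantitative fragment estimate for bridge-addable classes (roughly that $\E |\Frag(R_n)|$ is bounded); without importing or reproving that, the argument does not close. Two smaller unflagged points: (a) you use $B=e^{C}$, which requires that $\cb$ be exactly the class of graphs with every component in $\cc$; closure of $\cb$ under disjoint union is immediate from the definition of freely addable, but the converse (that every component of a graph in $\cb$ is itself in $\cb$) is not obvious and needs its own argument or must be part of the hypotheses in~\cite{cmcd09}; and (b) the closing ``any convergent subsequence of $p_n$'' step implicitly needs simultaneous control of $p_{n-m}$ for all fixed $m$, so it is safer phrased as a $\liminf$/$\limsup$ sandwich than as extraction of a single subsequential limit.
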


\begin{proofof}{Theorem~\ref{thm.conn}}
  By Lemma~\ref{lem.tv}, we may work with $R^a_n$, rather than with $R_n$.
  Let $\ca$ denote $\apex^k \cf$: thus $R^a_n \inu \ca_n$.
  Clearly $\ca$ is bridge-addable,
  and the class of graphs freely addable to $\ca$ is $\cf$.
  By Theorem~\ref{thm.apexforest}, $\ca$ is smooth, with growth constant $2^k e$.
  By Lemma~\ref{lem.gen} above, it now remains only to show that
  $\pr(\Frag(R^a_n) \in \cf) \to 1$ as $n \to \infty$.
  We may assume that $k \geq 1$.

  By Theorem~\ref{thm.apexforest}, the class $\apex^{k-1} \cf$
  has growth constant $2^{k-1}e$,
  and so the class $\cd$ of graphs with each component in $\apex^{k-1} \cf$
  also has growth constant $2^{k-1}e$ (by the `exponential formula').
  If $G \in (\ca \setminus \cd)_n$ and $\Frag(G) \not\in \cf$,
  then apart from a unique component of size at most
  $\lfloor n/2 \rfloor$ which is in $\ca \setminus \apex^{k-1} \cf$
  the rest of the graph is in $\cf$;
  and the number of such graphs is at most
\[
  \sum_{t=0}^{\lfloor n/2\rfloor} {n \choose t} |\ca_t| \cdot | \cf_{n-t}|
  = n! (e+o(1))^n 2^{kn/2} =  2^{-kn/2 +o(n)} \cdot |\ca_n|.
\]
  Thus $\pr(\Frag(R^a_n) \not\in \cf) = e^{-\Omega(n)} = o(1)$.
\end{proofof}
\medskip

\noindent
  Lemmas 4.3 and 4.4 in~\cite{cmcd09} may be used to yield further results
  on $\Frag(R_n)$.
\medskip

\begin{proofof}{Theorem~\ref{thm.chi}}
  By Lemma~\ref{lem.tv} it is sufficient to consider $R^c_n$ rather than $R_n$.
  But it is easy to see that, with probability $\to 1$ as $n \to \infty$,
  there are adjacent vertices in $V \setminus S$ which are adjacent to each vertex
  in $S$; and the theorem follows.
\end{proofof}


\section{No two disjoint cycles}
\label{sec.no2}

  Let $\cd^k$ denote the `difference' class
  $\ex (k\!+\!1)C \setminus \apex^k \cf$,
  the class of graphs with no $k+1$ disjoint cycles but with
  no blocker of size at most $k$.
  Our main result, Theorem~\ref{thm.cycles},
  shows that $\cd^k$ is exponentially smaller than $\ex (k\!+\!1)C$.
  For the case $k=1$ we can say much more about $\cd = \cd^1$, based on results from 1965 of
  Dirac~\cite{dirac65} and Lov\'asz~\cite{lovasz65}, see also Lov\'asz~\cite{lovasz93} problem 10.4.

  We need some definitions and notation.
  The {\em 2-core} or just {\em core} of a graph $G$ is the unique maximal subgraph of
  minimum degree at least 2, and is denoted by $\core(G)$.  
  Let $\tilde{\ck}$ denote the class of graphs homeomorphic to $K_5$;
  let $\tilde{\cb}$ denote the class of graphs homeomorphic to a multigraph
  $\tilde{K}_{3,t}$ formed from the complete bipartite graph $K_{3,t}$ for some $t \geq 0$
  by possibly adding edges or multiple edges between vertices in the `left part' of size 3
  ($K_{3,0}$ has only a `left part');
  and let $\tilde{\cw}$ denote the class of graphs homeomorphic to a multigraph 
  formed from the $t$-vertex wheel $W_t$ for some $t \geq 4$ by possibly adding parallel
  edges to some spokes.
  Let $\ck$, $\cb$, $\cw$ denote the classes of graphs $G$ such that $\core(G)$ is in
  $\tilde{\ck}$, $\tilde{\cb}$, $\tilde{\cw}$ respectively.
  Call the graphs in $\cw$ {\em generalised wheels}, and note that $\cw \subseteq \cd$.

\begin{theorem} (Dirac~\cite{dirac65}, Lov\'asz~\cite{lovasz65})
  \label{thm.dl}
  \[
  \ex \ 2C = (\apex \, \cf) \cup \cw \cup \cb \cup \ck.
  \]
\end{theorem}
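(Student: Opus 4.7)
The two inclusions are quite different in character. The direction $\supseteq$ is a short class-by-class verification, while $\subseteq$ is the substantive content due to Dirac and Lov\'asz.

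For $\supseteq$, I would check directly that each of the four classes lies in $\ex\ 2C$. For $G \in \apex\cf$ every cycle passes through the apex, so no two cycles are vertex-disjoint. For $G \in \ck$, any cycle in a subdivision of $K_5$ uses at least three of the five branch vertices, so two disjoint cycles would need six. For $G \in \cb$, any cycle in a subdivision of $\tilde K_{3,t}$ uses at least two of the three left-part branch vertices (cycles of $K_{3,t}$ alternate sides, and extra left-part edges give only $2$- or $3$-cycles among the left vertices), so two disjoint cycles would need four. For $G \in \cw$ with core $\tilde W$, each cycle of $\tilde W$ is either the outer rim or uses the hub together with at least two rim vertices, so the rim meets every hub-cycle in a rim vertex and any two hub-cycles share the hub; subdivision preserves these meeting properties.

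For $\subseteq$, I would first reduce the graph. Pendant trees can be added or removed without affecting membership in any of the relevant classes, so it suffices to treat $G=\core(G)$. Subdividing or unsubdividing edges preserves every invariant in sight, so after collapsing maximal chains of degree-$2$ vertices we obtain a multigraph $H$ of minimum degree $\geq 3$ (the degenerate cases where $H$ is empty or a single loop correspond to $G \in \apex\cf$). A block decomposition of $H$ has at most one block that contains a cycle (else two different blocks would already yield two disjoint cycles), so we may further assume $H$ is $2$-connected.

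The hardest step is the structural classification of $2$-connected multigraphs $H$ of minimum degree $\geq 3$ with no two disjoint cycles. Following~\cite{lovasz93}, Problem~$10.4$, I would fix a longest cycle $C$ of $H$ and analyze the $C$-bridges. If every bridge attaches to $C$ at a single common vertex $h$, then $h$ is an apex and $G \in \apex\cf$. If all bridges share a common attachment $h$ and each has one further attachment on $C$, we recover a generalized wheel and $G \in \cw$. If the set of bridge attachments on $C$ is a fixed $3$-element set $\{a,b,c\}$, bridge-by-bridge analysis together with the no-two-disjoint-cycles hypothesis forces a subdivision of some $\tilde K_{3,t}$, so $G \in \cb$. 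The remaining cases, where attachments spread over four or more vertices of $C$ and no common hub exists, either exhibit two disjoint cycles directly (contradicting the hypothesis) or collapse to a subdivision of $K_5$, giving $G \in \ck$. The delicate point, and the place where I expect the bulk of the work, is the exchange argument that rules out further minimal configurations; this case analysis is the heart of the original proofs in~\cite{dirac65} and~\cite{lovasz65}.
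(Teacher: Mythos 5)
The paper does not actually prove Theorem~\ref{thm.dl}; it is cited as a classical structural result of Dirac and Lov\'asz, so there is no in-paper argument to compare against, and your sketch has to be judged on its own terms.

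Your $\supseteq$ direction is essentially right, with one small slip in the $\cw$ case: since parallel spokes are permitted, a hub-cycle may use the hub together with only \emph{one} rim vertex (a doubled spoke gives a $2$-cycle), not ``at least two'' as you claim. The conclusion survives, because every hub-cycle still meets the rim in at least one rim vertex and any two hub-cycles share the hub, so no two disjoint cycles exist. The $\ck$ and $\cb$ verifications are correct.

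Your $\subseteq$ direction is the right roadmap (pass to the $2$-core, suppress degree-$2$ vertices to get a multigraph $H$ with $\delta(H)\ge 3$, reduce to $H$ $2$-connected, then classify by the attachments of bridges of a longest cycle), and it does match the approach suggested by Lov\'asz's Problem~10.4. Two caveats. First, the $2$-connectivity step is justified too quickly: ``two blocks containing cycles already give two disjoint cycles'' is false in general, since two endblocks may share their cut vertex $c$ and every cycle in each may pass through $c$. One must use $\delta(H)\ge 3$ here: every non-cut vertex of an endblock $B$ has all its $\ge 3$ edges inside $B$, so $B-c$ has minimum degree $\ge 2$ and therefore contains a cycle; then the two endblocks really do yield disjoint cycles. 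Second, and more substantively, the bridge case analysis that you defer to the references is precisely where the theorem is proved; showing that the possible attachment patterns force $\apex\cf$, $\cw$, $\cb$, or $\ck$ and nothing else is the genuinely hard content, and a proof that omits it has not established the inclusion. Since the paper also treats this as a black-box citation, your outline is a reasonable piece of exposition, but you should be explicit that the essential argument is not supplied.
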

  By Theorems~\ref{thm.cycles} and~\ref{thm.apexforest},
  $\ex\ 2C$ and $\apex \cf$ both have growth constant $2e$,
  and $\cd = \ex\ 2C \setminus \apex \cf $ is exponentially smaller.
  The next result shows that $\cd$ is dominated by the class $\cw$ of generalised wheels,
  and gives an asymptotic formula for $|\cd_n|$.

\begin{theorem} \label{thm.w}
  The classes $\ck$ and $\cb$ each have growth constant $e$,
  and $\cw$ has growth constant $\gamma$ satisfying $\; e < \gamma < 2e$.  Indeed
  $|\cw_n| \sim c/n \ \gamma^n \ n!$, where the constants $c$ and $\gamma$ are given by
  equations~(\ref{eqn.rad}) and~(\ref{eqn.wplus}).
  %
  Thus $|\cd_n| \sim c/n \ \gamma^n \ n!$ so that $\cd$ has growth constant $\gamma$,
  and $\cd \setminus \cw$ has growth constant $e$.
  To 3 decimal places we have $c=0.158$ and $\gamma = 4.346$.
\end{theorem}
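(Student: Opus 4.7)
The plan is to attack each of $\ck$, $\cb$, $\cw$ via its core and the standard core-plus-trees decomposition. Let $T(z) = \sum_{n \geq 1} n^{n-1} z^n/n!$ be the EGF of rooted trees (so $T(1/e) = 1$ and $T$ has a square-root singularity at $1/e$), and let $U(z) = T(z) - T(z)^2/2$ be the EGF of unrooted trees. For each of our classes $\mathcal{X} \in \{\ck, \cb, \cw\}$ with core family $\tilde{\mathcal{X}}$, every graph in $\mathcal{X}$ decomposes uniquely as a connected component containing the core (with a rooted tree hanging off every core vertex) together with a forest of additional tree components, giving
\[
X(z) \;=\; \tilde{X}(T(z)) \cdot \exp(U(z)).
\]
The asymptotics then reduce to locating the dominant singularity of $\tilde{X}(T(z))$.

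For $\ck$ a core is a subdivision of $K_5$: one chooses $5$ branch vertices and places an independent length-$\geq 1$ path between each of the $\binom{5}{2} = 10$ pairs, and a labelled path between two fixed endpoints with $k$ internal vertices contributes $k!$, so has EGF $1/(1-z)$. This gives
\[
\tilde{K}(z) \;=\; \frac{z^5}{5!\,(1-z)^{10}},
\]
with radius of convergence $1$. For $\cb$ a similar if more elaborate computation applies: summing over the number $t \geq 0$ of ``right'' branch vertices (each contributing a tripod of EGF $z/(1-z)^3$) and over the multi-edge pattern on the three ``left'' vertices through an exponential factor encoding parallel-path bundles, $\tilde{B}(z)$ also has radius of convergence $1$. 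In both cases $\tilde{X}(T(z))$ has an algebraic singularity at $z = 1/e$ (where $T(z) = 1$), and the transfer theorem gives growth constant $e$.

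For $\cw$ I build $\tilde{W}$ in three layers. The spoke-bundle EGF (a nonempty set of internally disjoint paths from a rim branch to the hub, with at most one of length one to keep the core simple) is $S(z) = 2 \exp(z/(1-z)) - 1$, the factor $2$ reflecting the optional direct edge. A ``rim unit'' (a rim branch, its spoke bundle, and the subdivided rim path to the next rim branch) has EGF $V(z) = z S(z)/(1-z)$; after hanging a hub off an unoriented cyclic arrangement of $t \geq 4$ such units one obtains
\[
\tilde{W}(z) \;=\; z \sum_{t \geq 4} \frac{V(z)^t}{2t} \;=\; z \Bigl( -\tfrac{1}{2} \ln(1-V(z)) - \tfrac{V}{2} - \tfrac{V^2}{4} - \tfrac{V^3}{6} \Bigr),
\]
the $1/(2t)$ factor encoding the rotational and reflective symmetries of the rim. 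Since $V$ is strictly increasing on $[0,1)$, vanishes at $0$, and tends to $\infty$ as $z \to 1^-$, there is a unique $\rho_W \in (0,1)$ with $V(\rho_W) = 1$, at which $\tilde{W}$ has a logarithmic singularity. Writing $u = \rho_W/(1-\rho_W)$ this equation becomes $u(2 e^u - 1) = 1$, solved numerically by $\rho_W \approx 0.316$; since $T^{-1}(w) = w e^{-w}$, the dominant singularity of $\tilde{W}(T(z))$ is at $\rho = \rho_W e^{-\rho_W}$, so $\gamma = 1/\rho = e^{\rho_W}/\rho_W \approx 4.346$.

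Singularity analysis closes the argument. Because $\rho < 1/e$, $T$ is analytic at $\rho$ with $T(\rho) = \rho_W$, and the logarithmic singularity of $\tilde{W}$ transfers directly to one of the same type for $\tilde{W}(T(z)) \exp(U(z))$ at $z = \rho$; the standard log-type transfer theorem then yields $|\cw_n| \sim (c/n)\gamma^n n!$ with $c$ expressible in terms of $V'(\rho_W)$, $T'(\rho)$ and $\exp(U(\rho))$. The inequalities $e < \gamma < 2e$ reduce to $\rho_W < 1$ (immediate) and $T(1/(2e)) < \rho_W$, which holds since $T(1/(2e)) \approx 0.232 < 0.316 \approx \rho_W$. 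Finally, Theorem~\ref{thm.dl} gives $\cd \subseteq \cw \cup \cb \cup \ck$, and since $\cb, \ck$ have the strictly smaller growth constant $e$, we conclude $|\cd_n| \sim |\cw_n|$ and $\cd \setminus \cw$ has growth constant $e$. The main obstacle is the careful setup of $\tilde{W}$, in particular the spoke-bundle simplicity constraint and the $1/(2t)$ cycle symmetry factor; once $V(\rho_W) = 1$ is isolated, the remainder is routine singularity analysis.
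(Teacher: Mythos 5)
Your approach is essentially the paper's: decompose each class as a 2-core with rooted trees substituted for core vertices, and for $\cw$ reduce to a supercritical cycle/log composition and locate the singularity. Your critical equation $u(2e^u-1)=1$ with $u=\rho_W/(1-\rho_W)$ is, after the substitution $x=u/(1+u)$, exactly the paper's condition $S(x)=1$ for its spider EGF $S(z)=2ze^{z/(1-z)}$; hence $\rho_W$ is the paper's $x$, and $\gamma=e^{\rho_W}/\rho_W=1/r$ with $r=\rho_W e^{-\rho_W}$ agrees with $\gamma=1/r$, $R(r)=x$. Your organisation of the rim (rim units $V(z)=zS(z)/(1-z)$ in a cycle) versus the paper's (a cycle of spiders, $C(S(z))$) is just a regrouping; both yield the same supercritical behaviour.

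A few points to reconcile, one of which actually affects the constant. First, the sum in $\tilde W$ should start at $t\ge 3$, not $t\ge 4$: the $t$-vertex wheel $W_t$ (paper's convention) has $t-1\ge 3$ rim vertices, and each original rim vertex is exactly one of your rim units. This is harmless for the singularity and for $c$, but it changes the exact EGF and it is a sign that one should sanity-check low-order coefficients. Second, and more consequentially, you multiply by $\exp(U(z))$ for extra forest components, whereas the paper works with the pure substitution $W(z)=\tilde W(R(z))$ (connected graphs built on a wheel core). The paper's stated constant $c=\tfrac12 R(r)\approx 0.158$ is what the latter gives; your formula would instead give $\tfrac12 R(r)\,e^{U(r)}\approx 0.21$, which does not match the theorem statement. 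You need to settle whether $\cw$ (hence $\cd$) includes graphs with additional acyclic components: as literally defined via $\core(G)\in\tilde\cw$ it does, but the paper's computation omits the $e^{U(z)}$ factor, so one of the two has to be adjusted and you should say which. Third, you never actually extract $c$: ``expressible in terms of $V'(\rho_W)$, $T'(\rho)$ and $\exp(U(\rho))$'' is a placeholder, and the proof should perform the local expansion $\tilde W(R(z))\sim -\tfrac12 R(r)\ln(1-z/r)$ near $z=r$ and read off $c$ from it, as the paper does.
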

  %
  \medskip

  \begin{proof}\hspace{.05in}
  Direct estimation shows easily that $\tilde{\ck}$ has growth constant 1.
  Let ${\cal R}$ denote the class of rooted trees, so that $R(z) = \sum_{n \geq 1} n^{n-1} z^n/n!$.
  It is well known that the radius of convergence $\rho_R$ of $R$ equals $1/e$ and $R(1/e)=1$.
  Since graphs in $\ck$ are obtained from graphs in $\tilde{\ck}$
  by substituting rooted trees for vertices, we have $K(z)= \tilde{K}(R(z))$, and it follows that
  $\ck$ has growth constant $e$.  In a similar way we may see that $\cb$ also has
  growth constant~$e$.

  Now let us consider $\cw$.  We need to see how graphs in $\cw$ are formed from simpler graphs.
  A `hairy cycle' is a graph formed by attaching paths to a cycle.
  More precisely, a connected graph is a {\em hairy cycle} if its core is a cycle
  and each vertex not on the cycle has degree 1 or 2.
  A {\em coloured hairy cycle} is a hairy cycle in which each vertex on the cycle is coloured
  black or white.
  Let $\ch^+$ be the class of coloured hairy cycles,
  and let $\ch$ be the class of graphs in $\ch^+$ such that at least 3 vertices on the cycle
  are either coloured black or have degree at least three.
  We shall see later that the difference between $\ch^+$ and $\ch$ is negligible.

  Let $\cs$ denote the class of homeomorphs of a star (sometimes called `spiders'),
  rooted at the centre vertex,
  with the root coloured black or white. Thus the graphs in $\cs$ correspond to a
  black or white root vertex and a set of oriented paths; and so
  $S(z) = 2z e^{z/(1-z)}$.
  Recall that the exponential generating function for cycles is
  $C(z)= -\frac12 \ln(1-z) - \frac12 z - \frac14 z^2$.
  Graphs in $\ch^+$ are obtained from cycles by substituting a rooted graph from $\cs$ for each vertex,
  so $H^+(z)= C(S(z))$.

  %
  Let $\tilde{\cw}^+$ be the class of graphs $G$ obtained by starting with a root vertex $v$
  and a graph $H \in \ch^+$ not containing $v$; and joining $v$ to each leaf of $H$ and to
  each black vertex on the cycle in $H$, and then removing all colours.
  If the initial graph $H$ is in $\ch$ then $G \in \tilde{\cw}$
  (the rooting of $v$ is irrelevant since the `centre' vertex of a wheel is unique,
  so we may say $\tilde{\cw} \subseteq \tilde{\cw}^+$).
  Conversely, given a graph $G$ in $\tilde{\cw}^+$, with root vertex $v$,
  colour the vertices on the rim black if they are adjacent to $v$ and white otherwise,
  and then delete $v$.  We obtain a graph $H$ in $\ch^+$, and if the initial graph $G$ is in
  $\tilde{\cw}$ then $H \in \ch$.
  Hence $\tilde{W}(z) = z H(z)$ and $\tilde{W}^+(z) = z H^+(z)$.

  Let $\cw^+$ be the class of graphs formed by starting with a graph in $\tilde{\cw}^+$
  and substituting rooted trees for vertices.  (Thus $\cw^+$ is the class of graphs with 2-core in
  $\tilde{\cw}^+$, except that we always treat the root as having degree at least 2.)  Then,
  $\cw \subseteq \cw^+$, and arguing as earlier,
  $W(z)= \tilde{W}(R(z))$, and $W^+(z)= \tilde{W}^+(R(z))= R(z)\ C(f(z))$ where $f(z)=S(R(z))$.

  Observe that $S(\frac12)=e>1$, so there exists $x$ with $0<x<\frac12$ such that $S(x)=1$.
  Since $\rho_R = 1/e$ and $R(1/e)=1$, there exists $r$ with $0< r < 1/e$ such that
  $R(r)=x$; and so
  \begin{equation} \label{eqn.rad}
  f(r) = S(R(r))= 1.
  \end{equation}
  We have a supercritical composition $C(f(z))$ (see~\cite{fs09} VI.9 page 411).
  It follows from standard results as in~\cite{fs09} that
 \begin{equation} \label{eqn.wplus}
   |\cw^+_n| \sim c/n \ \gamma^{n} \ n! \;\;\;
   \mbox{ where }\; \gamma=1/r \; \mbox{ and }\; c= \frac12 R(r).
 \end{equation}
  Finally, it is easy to see that $\ch^+ \setminus \ch$ has growth constant 1, and so
  $\cw^+ \setminus \cw$ has growth constant $e < \gamma$.  Thus the asymptotic formula
  in~(\ref{eqn.wplus}) applies also to $\cw$.

  Numerical calculations 
  yield $c$ and $\gamma$ as given in the theorem.
  Indeed $S(x)=1$ for $x= 0.315411$ (to six decimal places); $c=x/2$; and
  $R(r)= x$ for $r = 0.230089$ (to six decimal places).
\end{proof}


\section{Concluding Remarks}
\label{sec.concl}


  Our results are stated for a
  fixed number $k$ of disjoint cycles, but they hold also when $k$
  is allowed to grow with $n$.
  Indeed it is straightforward to adapt the proofs to show that
  Theorem~\ref{thm.apexforest} holds as long as $k=o(n)$,
  and Theorem~\ref{thm.cycles} holds for $k = o(\ln n / (\ln\ln n)^2)$
  (in the proof take $t= \omega(n)\ n/\ln n$ where $\omega(n) \to \infty$
  slowly as $n \to \infty$).

  It would be interesting to know more about the difference class
  $\cd^k = \ex (k\!+\!1)C \setminus \apex^k \cf$ for $k \geq 2$, ideally along the lines
  of the results on $\cd^1$ in the last section.
  There are results for unlabelled graphs corresponding to the results given here
  for labelled graphs -- see~\cite{kmcd09}.
  
  The Erd\H{o}s-P\'{o}sa theorem was extended from disjoint cycles to
  suitable more general disjoint graph minors by Robertson and
  Seymour~\cite{rs86} in 1986.
  Our results can be extended in this direction,
  and we do so in~\cite{vk-cmcd09b}.
  For example, there is a result corresponding to Theorem~\ref{thm.cycles}
  for `long' cycles.  Fix an integer $j \geq 3$, and call a cycle {\em long}
  if it has length at least $j$.
  Then amongst all graphs $G$ on $\{ 1,\ldots,n\}$
  which do not have $k+1$ disjoint long cycles,
  all but an exponentially small proportion have a set $B$ of $k$ vertices
  such that $G-B$ has no long cycles.
  There is also a version of the Erd\H{o}s-P\'{o}sa theorem for directed
  graphs~\cite{rrst96}: what can be said in this case?

  As well as concerning a problem which is interesting in its own right,
  the results presented here are a step towards understanding the behaviour of
  random graphs from a minor-closed class
  where the excluded minors are not 2-connected, see the last section
  of~\cite{cmcd09}.


  {
  
}

\end{document}